\def\eqref#1{equation~\ref{#1}}
\def\1{\bm{1}}
\DeclareMathAlphabet{\mathsfit}{\encodingdefault}{\sfdefault}{m}{sl}
\SetMathAlphabet{\mathsfit}{bold}{\encodingdefault}{\sfdefault}{bx}{n}
\DeclareMathOperator*{\argmin}{arg\,min}
\title{Spherical Coordinates from Persistent Cohomology}
\author{Nikolas C. Schonsheck\thanks{University of Delaware, Department of Mathematical Sciences} \ and Stefan C. Schonsheck\thanks{University of California Davis, Department of Mathematics, TETRAPODS Institute of Data Science} }
\date{}
\newcommand{\ZZ}{\mathbb{Z}}
\newcommand{\FF}{\mathbb{F}}
\newcommand{\RR}{\mathbb{R}}
\newcommand{\CC}{\mathbb{C}}
\newcommand{\iso}{\cong}
\newcommand{\BC}{\text{\small{BC}}}
\newcommand{\sett}[1]{\left\{#1\right\}}
\newcommand{\abs}[1]{\left\vert#1\right\vert}
\newcommand{\norm}[1]{\left\Vert#1\right\Vert}
\newcommand{\bc}{\text{\sc bc}}
\theoremstyle{plain}
\newtheorem{thm}{Theorem}
\newtheorem{prop}[thm]{Proposition}
\theoremstyle{definition}
\newtheorem{defn}[thm]{Definition}
\newtheorem{remark}[thm]{Remark}
\DeclareMathOperator{\sk}{sk}
\DeclareMathOperator{\id}{id}
\begin{document}

\title{Spherical coordinates from persistent cohomology}

\author{Nikolas C. Schonsheck\thanks{University of Delaware, Department of Mathematical Sciences} \ and Stefan C. Schonsheck\thanks{University of California Davis, Department of Mathematics, TETRAPODS Institute of Data Science} }

\maketitle

\begin{abstract}
We describe a method to obtain spherical parameterizations of arbitrary data through the use of persistent cohomology and variational optimization. We begin by computing the second-degree persistent cohomology of the filtered Vietoris-Rips (VR) complex of a data set $X$ and extract a cocycle $\alpha$ from any significant feature. From this cocycle, we define an associated map $\alpha: VR(X) \to S^2$ and use this map as an infeasible initialization for a variational model, which we show has a unique solution (up to rigid motion). We then employ an alternating gradient descent/M\"{o}bius transformation update method to solve the problem and generate a more suitable, i.e., smoother, representative of the homotopy class of $\alpha$, preserving the relevant topological feature. Finally, we conduct numerical experiments on both synthetic and real-world data sets to show the efficacy of our proposed approach. 
\end{abstract}

\section{Introduction}

Representing high dimensional data in a lower dimension through nonlinear dimensionality reduction (NLDR) algorithms is a key step in understanding complex data. Different NLDR methods attempt to preserve different key properties of the data, such as global structure \cite{belkin2003laplacian}, local neighborhoods \cite{roweis2000nonlinear}, statistical properties \cite{van2008visualizing} or  manifold constraints \cite{mcinnes2018umap}, while simultaneously simplifying the representation of the information contained in the data set \cite{tsai2007dimensionality}. Recently, algorithms that can preserve various topological properties have been of particular interest \cite{Carlsson_topology_and_data, de2011persistent, falorsi2018explorations}. In many applications, preserving topological features of data, such as ``holes'' of various dimensions, is key to downstream tasks including classification, regression, and novel data generation \cite{schonsheck2019chart}. In this work, we generalize the ideas of de Silva et al. \cite{de2011persistent} to a higher dimension, producing topologically-invariant spherical parameterizations of data sets as represented by simplicial complexes.

Simplicial complexes are natural generalizations of graphs that can encode higher-dimensional information beyond pairwise relationships. From a theoretical standpoint, they are useful in that they provide a ``rigid'' model for topological spaces that can be represented by a finite sample of points from the space \cite{belkin2003laplacian}. Moreover, most reasonable spaces can be represented by a simplicial complex up to homotopy equivalence. In applications to data science, their advantages are even more numerous. For instance, whereas graphs contain only vertices and edges, which capture binary relations, simplicial complexes are built from vertices, edges, and their analogs in higher dimensions---namely $n$-simplices---and can encode relationships between any number of points. Simplicial complexes can also be easily represented on a computer, and there are tractable algorithms and software packages---such as Ripser \cite{ripser} and Eirene \cite{eirene}---that can calculate their topological invariants.

In this paper, we develop a method to find spherical parameterizations of simplicial complexes that have nontrivial topology as seen by second-degree cohomology. This method is based on an understanding that instructions for an oriented winding of triangular faces (i.e., 2-simplices) of a simplicial complex $X$ can be extracted from any cocycle in second-degree cohomology. Given a cocycle and list of faces, we define a variational problem with a unique optimum up to rigid motion. The resulting map $X \to S^2$ is useful for both data visualization and downstream analysis. In summary, our main contributions are as follows.

\begin{itemize}
    \item A generalization of the circular coordinates method of de Silva et al. \cite{de2011persistent} to spherical coordinates.
    \item A definition of \emph{discrete simplicial harmonic energy} for maps between simplicial complexes and the unit sphere based on oriented winding, or degree.
    \item A generalized \emph{discrete simplicial spring energy}, based on the energy above, which can be useful for noisy or non-uniformly sampled data including in the degree one, or circular, case.
    \item An efficient optimization method for minimizing these energies in both the degree one and two settings. 
    \item A proof that our smoothing procedure preserves the homotopical information encoded by the initial map.  
\end{itemize}

In Figure \ref{fig:comparison} we compare our algorithm to several popular nonlinear dimensionality reduction methods. We begin by randomly sampling 75 points on a sphere in $\RR^3$, then embed the sphere in $\RR^{50}$ and add random Gaussian noise in this higher dimension. We then use Laplacian Eigenmaps \cite{belkin2003laplacian}, ISOMAP \cite{balasubramanian2002isomap}, t-SNE \cite{van2008visualizing}, UMAP \cite{mcinnes2018umap} and our method to embed the noisy data back into $\RR^{3}$. Although some of these methods produce results that are roughly spherical, ours is the only one which maps exactly to $S^2 \in \RR^3$ and also is guaranteed to preserve the underlying topology. 

\begin{figure}
    \centering
    \includegraphics[width=.95\linewidth]{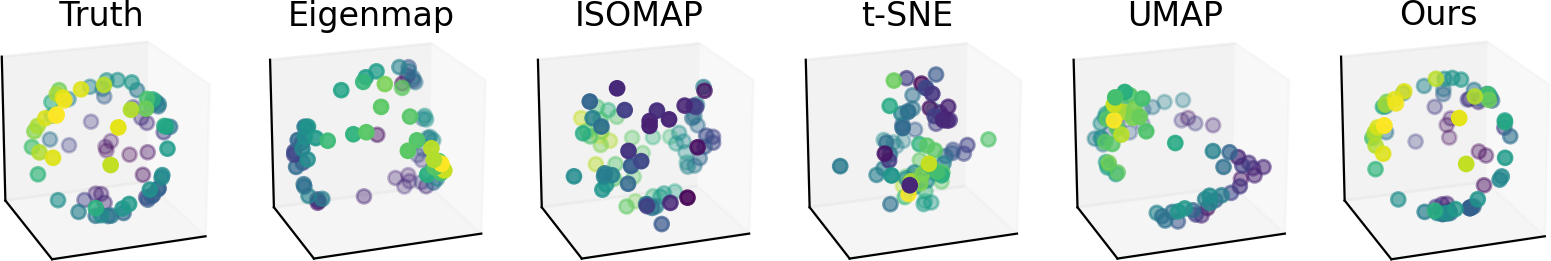}
    \caption{Nonlinear dimensionality reduction of a noisy sphere embedded in $\RR^{50}$ using several popular techniques and our methodology.}
    \label{fig:comparison}
\end{figure}

\subsection{Related Work}

The fundamental idea of this paper---that natural nonlinear coordinate systems for data can be extracted from persistent cohomology---goes back to \cite{de2011persistent}, where the authors recover circular coordinate systems from first-degree persistent cohomology. In more detail, de Silva et al. leverage the fact that the first integral Eilenberg-MacLane space $K(\ZZ,1)$ is homotopy equivalent to $S^1$ and the natural isomorphism $H^1(X; \ZZ) \ \iso \ [X, K(\ZZ,1)]$ given by Brown Representability to, first, define an initial map $\alpha \colon X \to S^1$ that represents a significant topological feature of the data set $X$. They then determine the ``smoothest'' such $\alpha$ by finding its harmonic representative in $H^1(X; \RR)$. Perea \cite{perea_circular} strengthens these results using the theory of principal $\ZZ$-bundles to design a more efficient and robust method for producing circular coordinates, known as ``sparse circular coordinates.''

The work above focuses on first-degree persistent cohomology. In this paper, we are interested in what can be obtained from second-degree persistent cohomology. The reason this is not a straightforward generalization is that in degree two, Brown Representability gives a natural isomorphism $H^2(X; \ZZ) \ \iso \ [X, K(\ZZ,2)]$ and whereas $K(\ZZ, 1) \simeq S^1$ is amenable to direct calculation, $K(\ZZ, 2)$ is homotopy equivalent to the infinite-dimensional projective space $\CC P^\infty$. Recent results of Perea \cite{perea_projective} in the context of equivariant dimensionality reduction show that one can reduce this problem to working in a finite-dimensional projective space via a PCA-type algorithm for projective spaces. However, the question raised in \cite{de2011persistent} of whether spherical coordinates, i.e., parameterizations $X \to S^2$, can be obtained from second-degree cohomology calculations has remained open thus far. Our results answer this question in the affirmative.  

In geometric data processing, defining maps from manifolds (or discrete representation thereof) to spheres has been a popular genre of approach for many problems. Most relevant to this work is that of \cite{gu2004genus} which defines a discrete version of harmonic energy that can be minimized to find a conformal map from an arbitrary genus-zero $2$-manifold (represented as a triangular mesh) to the unit sphere. Other shape processing algorithms have further developed these ideas for quasi-conformal maps \cite{lam2014landmark}, non-genus-0 maps \cite{zeng2010ricci} and others settings \cite{gu2003global, gu2008computational}. More recently, \cite{baden2018mobius} developed a shape matching pipeline that utilizes a M{\"o}bius transform of the unit sphere to find correspondences between genus-zero 3D shapes. As we will see in Section \ref{sec_smoothing}, the group of M{\"o}bius transformations will be essential to our smoothing procedure since they preserve important topological information, and can be easily approximated as in \cite{gu2004genus}.

In the geometric deep learning community, there has been a recent surge of interest in auto-regressive methods that are topologically informed. Works such as ~\cite{falorsi2018explorations, davidson2018hyperspherical, rey2019disentanglement, connor2020representing} introduce autoencoders with various non-Euclidean latent spaces that are useful for problems when the underlying data topology is known and relatively simple. This assumption is too strong for many real-world problems, where the topology is not known a priori.  In another line of research, \cite{schonsheck2019chart, schonsheck2022semi} and \cite{moor2020topological} provide methods for representing data sampled from unknown and arbitrary topological spaces. However, these methods can be sensitive to hyper-parameter tuning, and extracting topological invariants from the trained models is often nontrivial. 

Additionally, all of these auto-regressive methods rely on training neural networks with stochastic gradient descent to approximate the desired map(s). In general, the topologies are not guaranteed to be preserved---rather, the authors show that there exist networks that can preserve them and provide numerical experiments to show that the networks often do so. The method presented here, instead, finds the map directly by solving a variational optimization problem and is guaranteed to preserve the homotopy class of the map. Our method does not, however, lend itself directly to novel data generation, but rather to representation learning. 

\subsection{Main Pipeline}\label{subsec_main_pipeline}
To help orient the reader and outline the key points of the paper, we summarize here the main steps used to extract spherical parameterizations of data from second-degree persistent cohomology. We freely use notation that will be defined in later sections, but hope the following will be a useful reference. 

Let $X_\bullet = X_0 \to X_1 \to X_2 \to \cdots$ be a filtered simplicial complex with each $X_n$ finite. The main interest of this paper is the case when $X_\bullet$ is the filtered Vietoris-Rips complex on a data set of interest.
\begin{enumerate}
\item First, compute the degree two barcode $\BC^2(X_\bullet; \FF_p)$ of $X_\bullet$ with coefficients in a finite field $\FF_p$ and identify a bar of interest $\rho \in \BC^2(X_\bullet; \FF_p)$. As in \cite{de2011persistent}, we recommend choosing $p > 2$ so that $1$ and $-1$ are distinguished. 
\item Fix an interval decomposition of $PH^2(X_\bullet; \FF_p)$ and choose a parameter $\epsilon$ during the lifetime of $\rho$ to obtain a cohomology class $[\alpha_p] \in H^2(X_\epsilon; \FF_p)$ corresponding to $\rho$ under the chosen interval decomposition.
\item Letting $X_\epsilon = X$ for notation's sake, lift $[\alpha_p]$ to an integral cohomology class $[\alpha] \in H^2(X; \ZZ)$ as detailed in Section \ref{subsec_lifting_to_integer}.
\item As in Definition \ref{defn_canonical_lift} and Proposition \ref{prop_lifting_to_three_skeleton}, use $[\alpha]$ to define an \emph{initial map} $\tilde{\alpha} \colon \sk_3(X) \to S^2$.
\item Let $\alpha_0 \colon \sk_2(X) \to S^2$ be the restriction of $\tilde{\alpha}$ to the 2-skeleton of $X$.
\item Use the methodology of Section \ref{sec_smoothing} to produce a sequence of maps $\alpha_i \colon \sk_2(X) \to S^2$ for $0 \leq i \leq N$ converging to a map $\omega:=\alpha_N$ of minimal harmonic (or spring) energy.
\item Note that by Theorem \ref{thm_minimization_preserves_homotopy_on_2_skeleton} we have homotopies $\alpha_i \simeq \alpha_{i+1}$ for $0 \leq i \leq N-1$.
\item Appeal to Theorem \ref{thm_extension_to_3_skeleton} to see that 1) for each $0 \leq i \leq N$, $\alpha_i$ can be extended to a map $\tilde{\alpha_i} \colon \sk_3(X) \to S^2$, where we take $\tilde{\alpha_0} = \tilde{\alpha}$, and that 2) there is a sequence of homotopies
\begin{align}
\tilde{\alpha} = \tilde{\alpha_0} \simeq \tilde{\alpha_1} \simeq \cdots \simeq \tilde{\alpha_N} = \tilde{\omega}
\end{align}
of maps $\sk_3(X) \to S^2$.
\item Obtain the spherical parameterization $\tilde{\omega} \colon \sk_3(X) \to S^2$ with $\tilde{\omega} \simeq \tilde{\alpha}$ as maps $\sk_3(X) \to S^2$.
\end{enumerate}

\subsection{Acknowledgments}
The authors would like to thank Chad Giusti for many helpful discussions, and Jose Perea for an enlightening conversation about the paper. We are also grateful to several anonymous referees, whose detailed and thoughtful feedback led to significant improvements in the clarity and cohesiveness of the paper. NC Schonsheck's work is supported by the Air Force Office of Scientific Research under award number FA9550-21-1-0266 and SC Schonsheck's work is supported by the US National Science Foundation grant DMS-1912747.

\section{Persistent Cohomology}
In this section, we briefly recapitulate the basics of persistent cohomology. For a more thorough introduction, see \cite{Carlsson_topology_and_data} and \cite{Zomorodian_Carlsson}. Another useful treatment is given in \cite{analogous_bars}.

\subsection{Background on Persistent Cohomology}

Let $X$ be a finite simplicial complex and $F$ a finite field. For each $n \geq 0$, denote by $X^n$ the set of $n$-simplices of $X$ and let $C^n(X; F)$, or $C^n$ for short, denote the $F$-vector space of set maps $X^n \to F$. That is,
\begin{align}
C^n(X; F) = \sett{\text{maps of sets } f \colon X^n \to F}
\end{align}
where vector addition and scalar multiplication are inherited from $F$. Further define coboundary maps $d^n \colon C^n \to C^{n+1}$ by
\begin{align}
\begin{gathered}
(d^0f)(ab) = f(b) - f(a)\\
(d^1f)(abc) = f(bc) - f(ac) + f(ab)\\
(d^2f)(abcd) = f(bcd) - f(acd) + f(abd) - f(abc)\\
\vdots
\end{gathered}
\end{align}
It is a straightforward check that each $d^n$ is $F$-linear, allowing us to define the following.

\begin{defn}
The \emph{degree-$n$ cohomology} of the simplicial complex $X$ with coefficients in $F$ is the $F$-vector space given by
\begin{align}
H^n(X; F) = H^n(X) = \ker(d^n)/\text{im}(d^{n-1})
\end{align}
\end{defn}

In the persistent setting, rather than working with a fixed simplicial complex $X$, we often consider a filtered simplicial complex $X_\bullet$, i.e., a sequence of inclusions
\begin{align}\label{eqn_filtered_complex}
X_\bullet = X_0 \to X_1 \to X_2 \to \cdots
\end{align}
where each $X_n$ is a simplicial complex.

\begin{remark}
Here there is a potential for confusion in notation. We emphasize that if $X$ is a simplicial complex, then $X^n$ generally denotes its set of $n$-simplices. On the other hand, if $X_\bullet$ is a filtered simplicial complex, then $X_n$ refers to a simplicial complex. Similarly, given a simplicial complex $X$, we use $\sk_n(X)$ to denote its $n$-skeleton, i.e., the set of $k$-simplices of $X$ appropriately glued together for $k \leq n$.
\end{remark}

Because cohomology is functorial, any diagram as in Equation (\ref{eqn_filtered_complex}) induces the following corresponding diagram in cohomology

\begin{align}\label{eqn_filtered_homology}
PH^n(X_\bullet; F) = H^n(X_0; F) \leftarrow H^n(X_1; F) \leftarrow H^n(X_2; F) \leftarrow \cdots
\end{align}
called the persistent cohomology of $X_\bullet$. It follows from Equation (\ref{eqn_filtered_homology}) that we can view $PH^n(X_\bullet; F)$ as a graded $F$-vector space
\begin{align}
PH^n(X_\bullet; F) = \bigoplus_{i=0}^\infty H^n(X_i; F)
\end{align}
together with structure maps $H^n(X_i; F) \to H^n(X_{i-1}; F)$ for each $i > 0$. In other words, $PH^n(X_\bullet; F)$ is an example of a persistence module. 
\begin{defn}\label{defn_persistence_module}
A \emph{persistence module} is a $\ZZ$-graded $F$-vector space $V^\bullet = \oplus_{i=-\infty}^{\infty}V^i$ with linear structure maps $\sett{f^i\colon V^i \to V^{i+1}}$. We say that such a module has \emph{finite support} if $V^i = 0$ for all but finitely many $i$. Note that $V^\bullet$ has the structure of an $F[t]$-module where $t$ acts on $V^i$ via the structure map $f^i$. 
\end{defn}

\begin{remark}
To see that $PH^n(X_\bullet; F)$ is a persistence module, take $V^{-i} = H^n(X_i; F)$ as in Definition \ref{defn_persistence_module}.
\end{remark}

Provided $F$ is a field, $F[t]$ is a principal ideal domain (PID). Thus, if $PH^n(X_\bullet; F)$ has finite support and each $X_i$ is a finite simplicial complex, then the Structure Theorem for finitely generated modules over a PID implies that one can choose bases for each $H^n(X_i; F)$ that are compatible with the $F[t]$-module structure maps. We say such a basis element $[\sigma] \in H^n(X_k; F)$ is born at parameter $k$ if it is not in the image of $H^n(X_{k+1}; F) \to H^n(X_k; F)$ and that $[\tau]\in H^n(X_{\ell+1}; F)$ dies at parameter $\ell$ if it is mapped to $[0]$ by $H^n(X_{\ell + 1}; F) \to H^n(X_\ell; F)$. It further follows from the Structure Theorem that the collection of birth/death pairs of the basis elements is unique, i.e., does not depend on the particular choice of bases. This collection of pairs defines the barcode of $PH^n(X_\bullet; F)$, i.e., a triple

\begin{align}
(\bc^n(X_\bullet; F), \beta, \delta)
\end{align}
where $\bc^n(X_\bullet; F)$ is a finite set of bars, which we think of as corresponding to elements of some choice of compatible bases, and $\beta, \delta \colon \bc^n(X_\bullet; F) \to \ZZ$ record the birth and death times of these basis elements. For those familiar with persistence theory, it is worth noting that cohomological birth (resp., death) parameters correspond to homological death (resp., birth) parameters and that many software packages such as Ripser reindex persistence diagrams to align with the homological convention.

In general, there are many choices of compatible bases for a persistence module such as $PH^n(X_\bullet; F)$. Since we will be working with specific representatives of cohomology classes, it will be beneficial to fix a choice of basis. We call such a choice an \emph{interval decomposition} of $PH^n(X_\bullet; F)$. (See \cite[2.5]{chazal_structure_and_stability} for a useful reference.) In this paper, we generally assume that the practitioner (or relevant software) has chosen a fixed interval decomposition for any considered persistence module. 

\subsection{Lifting to Integer Coefficients}\label{subsec_lifting_to_integer}
In practice, one generally computes persistent cohomology with coefficients in a finite field $\FF_p$. However, our process for defining an $S^2$-valued map, below, requires an integral cohomology class. Given a simplicial complex $X$, our strategy for obtaining an element of $H^2(X; \ZZ)$ from an element of $H^2(X; \FF_p)$ is the same as that of \cite[2.4]{de2011persistent}. In more detail, consider the Bockstein long exact sequence
\begin{align}\label{eqn_bockstein}
\cdots \to H^2(X; \ZZ) \to H^2(X; \FF_p) \xrightarrow{\beta} H^3(X; \ZZ) \xrightarrow{\cdot p} H^3(X; \ZZ) \to \cdots
\end{align}
induced by the short exact sequence of coefficient rings $0 \to \ZZ \xrightarrow{\cdot p} \ZZ \to \FF_p \to 0$. Consider an element $[\alpha_p] \in H^2(X; \FF_p)$. Provided $\beta[\alpha_p] = [0]$ in $H^3(X; \ZZ)$, for instance if $H^3(X; \ZZ)$ has no nontrivial $p$-torsion elements, it follows from the exactness of the sequence that $[\alpha_p]$ is in the image of $H^2(X; \ZZ) \to H^2(X; \FF_p)$ and hence can be lifted to an element of $H^2(X; \ZZ)$. In practice, given $[\alpha_p] \in H^2(X; \FF_p)$, we replace the coefficients of $\alpha_p$ with their corresponding congruence classes in the range

\begin{align}
\sett{-(p-1)/2, \ldots, -1, 0, 1, \ldots, (p-1)/2}
\end{align}
and verify that the resultant cochain $\alpha \in C^2(X; \ZZ)$ satisfies the cocycle condition. Hence, $[\alpha]$ represents an element of $H^2(X; \ZZ)$. If this procedure fails, perhaps because $H^3(X; \ZZ)$ happens to have $p$-torsion, then we recommend simply choosing a different prime $p$ and redoing the calculation. However, we note that even in the absence of $p$-torsion one may still run into trouble---for instance, if $d^2\alpha = p\eta$ for some $\eta \in C^3(X; \ZZ)$. In this case, one can effect a ``repair'' as in \cite[2.4]{de2011persistent}. Thankfully, we have yet to encounter a real-world example where simply replacing coefficients as described above does not yield a cocycle.

\section{Obtaining a Map to the 2-Sphere}\label{sec:obtain}
Suppose $X_\bullet$ is a filtered finite simplicial complex, and select a bar $\rho \in \BC^2(X_\bullet; \FF_p).$ (While one can obtain a sphere-valued map from any bar by using the methodology below, we advise choosing a high-persistence bar, i.e., one that represents a significant topological feature. See Section \ref{subsec_feature_selection} for further discussion.) Fixing an interval decomposition of $P{H}^2(X_\bullet; \FF_p)$ and a parameter $\epsilon$ during the lifetime of $\rho$, we obtain a cohomology class $[\alpha_p] \in {H}^2(X_\epsilon; \FF_p)$ which we lift, as in Section \ref{subsec_lifting_to_integer}, to a class $[\alpha] \in H^2(X_\epsilon; \ZZ)$. Letting $X$ = $X_\epsilon$ for notation's sake, we now describe how to construct a sphere-valued map from $[\alpha]$.

The Brown Representability Theorem gives a natural isomorphism 
\begin{align}
H^2(X; \ZZ) \ \iso \ [X, K(\ZZ,2)]
\end{align}
where the latter term denotes the set (in this case, group) of homotopy classes of maps from $X$ into the Eilenberg-MacLane space $K(\ZZ,2) \simeq C\mathbb{P}^\infty$. Thus, the cohomology class $[\alpha]$ determines a unique homotopy class of maps $X \to K(\ZZ,2)$, and the goal now is to lift this to a sphere-valued map. Since there is a canonical inclusion $S^2 \to K(\ZZ,2)$, we can phrase this as searching for the dotted map below.
\begin{align}
\xymatrix{
& S^2 \ar[d]\\
X \ar@{.>}^-{\tilde{\alpha}}[ur] \ar^-\alpha[r] & K(\ZZ,2)
}
\end{align}
In general, there will be obstructions to lifting $\alpha$ on all of $X$ because $S^2$ has (infinitely) many nontrivial higher homotopy groups. However, since the inclusion $\sk_3(X) \to X$ induces an isomorphism $H^2(X; \ZZ) \xrightarrow{\iso} H^2(\sk_3(X); \ZZ)$, it is reasonable to instead define a lift from $\sk_3(X)$ which, in fact, always exists. To see this, first note that given a generic 2-simplex $\varphi$ and integer $n$, there is a homotopy-unique map $\varphi \to S^2$ defined by collapsing the boundary of $\varphi$ to obtain a 2-sphere, then mapping to $S^2$ via the degree $n$ map. Note that the degree $n$ map $S^2 \to S^2$ is the map that ``wraps $S^2$ around itself $n$ times,'' analogous to the map $S^1 \to S^1$ of winding number $n$. (More explicitly, this map can be obtained by first mapping $S^2$ to the wedge $\vee_{i=1}^{\abs{n}} S^2$ via the canonical ``pinch map'' and then mapping each constituent of the wedge to $S^2$ via the identity or antipodal map depending on the sign of $n$.) We now define the desired lift $\tilde{\alpha}$.

\begin{defn}\label{defn_canonical_lift}
Let $\alpha \in Z^2(X; \ZZ)$ be a 2-cocycle and fix a base-point $p \in S^2$. Let $\hat{\alpha}\colon\sk_2(X) \to S^2$ be given by mapping each vertex and 1-simplex of $X$ to $p$ and mapping each 2-simplex $\varphi$ of $X$ to $S^2$ as described above with $n=\alpha(\varphi)$. The following shows that this map can then be extended to a map $\tilde{\alpha} \colon\sk_3(X) \to S^2$. We call this extension $\tilde{\alpha}$ the \emph{canonical lift} of $\alpha$ and, in the context of Section \ref{subsec_main_pipeline}, the \emph{initial map}.
\end{defn}

\begin{prop}\label{prop_lifting_to_three_skeleton}
    The map $\hat{\alpha}$ defined on $\sk_2(X)$ in Definition \ref{defn_canonical_lift} can be extended to $\sk_3(X)$. 
\end{prop}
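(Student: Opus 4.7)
The plan is to argue by obstruction theory, extending $\hat\alpha$ across one 3-simplex at a time, and then verify that the cocycle condition on $\alpha$ makes the relevant obstruction vanish.

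First I would fix a 3-simplex $\sigma = [v_0 v_1 v_2 v_3]$ of $X$ and consider the restriction $\hat\alpha|_{\partial\sigma}$. Since $\hat\alpha$ sends every vertex and every edge of $X$ to the basepoint $p$, the restriction $\hat\alpha|_{\partial\sigma}$ factors through the quotient $\partial\sigma / \sk_1(\partial\sigma)$, which is naturally homeomorphic to a wedge of four 2-spheres, one for each 2-face of $\sigma$. On the $i$-th wedge summand, corresponding to the face $\varphi_i$ obtained by deleting $v_i$, the induced map to $S^2$ is (up to the homotopy built into Definition \ref{defn_canonical_lift}) the degree $\alpha(\varphi_i)$ self-map of $S^2$.

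Next I would compute the degree of the composite $\partial\sigma \cong S^2 \xrightarrow{\hat\alpha|_{\partial\sigma}} S^2$ by summing the contributions from each face with appropriate signs, dictated by whether the orientation of $\varphi_i$ inherited from $\partial\sigma$ agrees with its chosen orientation in the simplicial structure. Standard simplicial bookkeeping gives
\begin{align}
\deg\bigl(\hat\alpha|_{\partial\sigma}\bigr) \;=\; \alpha(v_1 v_2 v_3) - \alpha(v_0 v_2 v_3) + \alpha(v_0 v_1 v_3) - \alpha(v_0 v_1 v_2) \;=\; (d^2 \alpha)(\sigma).
\end{align}
Because $\alpha \in Z^2(X; \ZZ)$ is a cocycle, $d^2\alpha = 0$, so this degree is zero.

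Since $\pi_2(S^2) \cong \ZZ$ with the isomorphism given by degree, the map $\hat\alpha|_{\partial\sigma}$ is null-homotopic. Equivalently, viewing $\sigma$ as a cone on $\partial\sigma$, the null-homotopy provides a continuous extension of $\hat\alpha|_{\partial\sigma}$ over the interior of $\sigma$ into $S^2$. Performing this extension independently on each 3-simplex of $X$ (and noting the choices on distinct 3-simplices are compatible since they already agree on the common 2-skeleton) yields the desired extension $\tilde\alpha \colon \sk_3(X) \to S^2$.

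The one subtle point, and the step I would be most careful about, is verifying the signed-degree identity with $(d^2\alpha)(\sigma)$: the argument hinges on matching the orientations of the four 2-faces as summands of the pinch-collapsed $\partial\sigma$ with the orientations used in the simplicial coboundary formula. Once those signs are pinned down, the rest is immediate from the cocycle condition and the fact that degree classifies homotopy classes of self-maps of $S^2$.
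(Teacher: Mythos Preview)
Your proposal is correct and follows essentially the same approach as the paper: both argue simplex by simplex, factor $\hat\alpha|_{\partial\sigma}$ through the wedge of four spheres obtained by collapsing $\sk_1(\partial\sigma)$, compute the degree of the resulting self-map of $S^2$ as the signed sum $\alpha(bcd)-\alpha(acd)+\alpha(abd)-\alpha(abc)=(d^2\alpha)(\sigma)$, and invoke the Hopf degree theorem (equivalently $\pi_2(S^2)\cong\ZZ$) together with the cocycle condition to conclude null-homotopy and hence extendability. The paper phrases the degree computation via the induced maps $f_\ast,g_\ast$ on $H_2$, but the content is identical to your signed-degree bookkeeping.
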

\begin{proof}

    Let $\sigma = abcd$ be a 3-simplex in $X$. The map $\hat{\alpha}$ can be extended to $\sigma$ if and only if the map from the boundary of $\sigma$ to $S^2$ is nullhomotopic. Since the boundary of $\sigma$ is homeomorphic to $S^2$, this map will be determined up to homotopy by its degree (by the Hopf degree theorem) and be nullhomotopic if and only if it is of degree 0. The fact that $\alpha$ is a cocycle guarantees exactly that. To see this, consider the composite below (Figure \ref{fig_map_to_sphere}), determined by $\hat{\alpha}$ on $\sk_2(X)$.

    \begin{figure}[htbp]
    \centering
    \includegraphics[scale=.5]{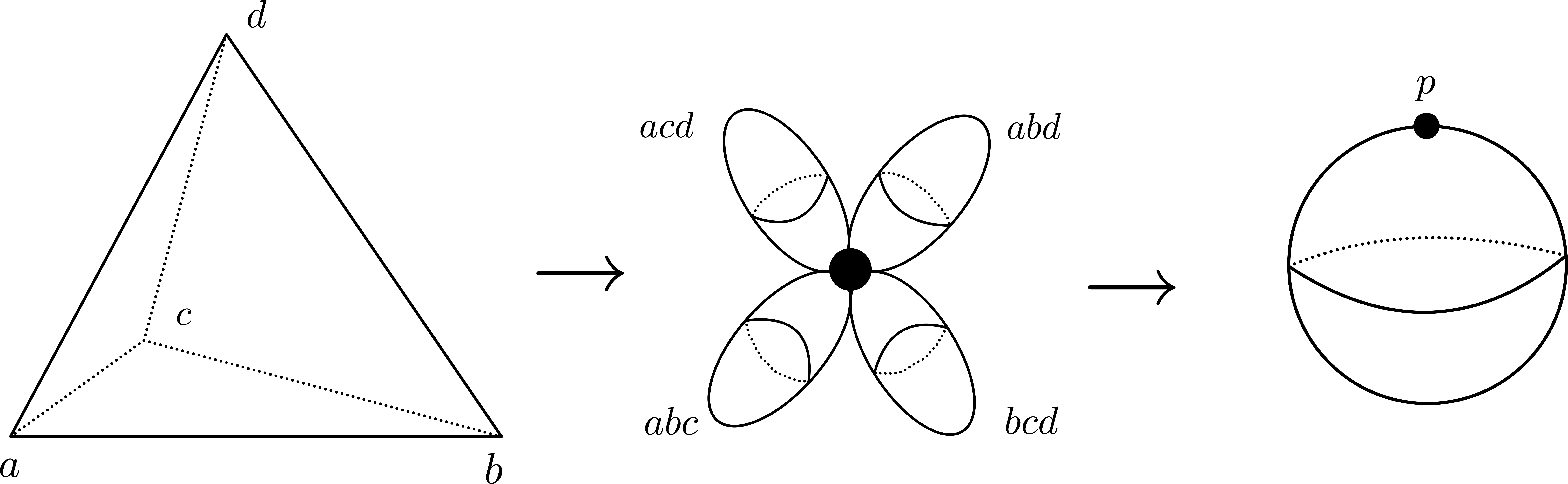}
    \caption{Given a 2-cocycle $\alpha$, we consider the corresponding map on the boundary of a generic 3-simplex $\sigma$.}
    \label{fig_map_to_sphere}
    \label{fig:map2s3}
    \end{figure}

    Let $f$ denote the first map in Figure \ref{fig_map_to_sphere} (which collapses the boundary of each bounding 2-simplex to a point to produce a wedge of spheres), $g$ the second (which maps each resultant sphere to $S^2$ via the degree map determined by $\hat{\alpha}$), and fix an orientation (i.e., an inward or outward facing normal) of $S^2$. We have induced maps on second-degree homology given by
    \begin{align}
    \xymatrix{
    \ZZ \ar^-{f_\ast}[r] & \ZZ_{bcd} \oplus \ZZ_{acd} \oplus \ZZ_{abd} \oplus \ZZ_{abc} \ar^-{g_\ast}[r] & \ZZ     
    }
    \end{align}

    The map $f_\ast$ is given by either $f_\ast(1) = (1, -1, 1, -1)$ or $f_\ast(1) = (-1, 1, -1, 1)$, depending on the chosen orientation.  In either case, $g_\ast$ is determined by
    \begin{align}\label{eqn_composite_map_on_homology}
    g_\ast(1, 1, 1, 1) = \alpha(bcd) + \alpha(acd) + \alpha(abd) + \alpha(abc)
    \end{align}
    and the cocycle condition guarantees that $g_\ast f_\ast = 0$. Indeed, we have
    \begin{align}
    g_\ast f_\ast(1) = \pm(\alpha(bcd) - \alpha(acd) + \alpha(abd) - \alpha(abc)) = 0
    \end{align}
    Repeating this argument for all 3-simplices of $X$ completes the proof.
\end{proof}

While Proposition \ref{prop_lifting_to_three_skeleton} shows that a lift $\tilde{\alpha}$ exists, it does not provide a description of the lift beyond the fact that it agrees with $\hat{\alpha}$ on $\sk_2(X)$. For a more explicit description, note that a null-homotopy on the boundary $\partial\sigma$ of a 3-simplex $\sigma$ as above is equivalent to a continuous map $h \colon (\partial\sigma \times [0,1]) / (\partial\sigma \times \sett{1}) \to S^2$. Since $(\partial\sigma \times [0,1]) / (\partial\sigma \times \sett{1})$ is homeomorphic to $\sigma$, the lift on a generic $3$-simplex $\sigma$ can be defined by composing such a homeomorphism with the map $h$.

In the context of Section \ref{subsec_main_pipeline}, we have now produced a sphere-valued map from the Vietoris-Rips complex on a data set of interest (more precisely, its 3-skeleton) to $S^2$. However, each vertex (i.e., data point) is mapped to the same selected basepoint in $S^2$. Our map is therefore of little use in data and downstream analysis. We now describe how to smooth this initial map to produce a useful spherical parameterization of the data.

\section{Smoothing the Initial Map}\label{sec_smoothing}
In this section, we detail Step (6) of the Main Pipeline (Section \ref{subsec_main_pipeline}). We assume that Steps (1)-(5) have already been performed, taking as granted an initial map $\tilde{\alpha}\colon\sk_3(X) \to S^2$, and letting $\alpha_0\colon\sk_2(X) \to S^2$ be the restriction of $\tilde{\alpha}$ to $\sk_2(X)$.

Although $\alpha_0$ maps our data (i.e., the vertices of $\sk_2(X)$) to the unit sphere, it is not very useful for visualization or analysis since all of the vertices are mapped to the same point. To develop a more useful mapping, we need to spread out the vertices across the sphere. We do so via a smoothing process that produces a sequence of maps $\alpha_i \colon \sk_2(X) \to S^2$ for $1 \leq i \leq N$ where $\alpha_N = : \omega$ is more amenable to analysis and downstream tasks. Crucially, we also show in Section \ref{sec_preserving_homotopical_info} that this smoothing process does not alter the homotopy class of the initial map.

\subsection{Harmonic Energy}

In order to obtain a useful representation of the data, we define the following notion of harmonic energy of a map from the 2-skeleton of a simplicial complex to the unit sphere. We are ultimately interested in minimizing this energy in order to smooth the map $\alpha_0$ and obtain a unique (up to rigid motion) final map $\alpha_N = \omega$.

\begin{defn}\label{defn_harmonic_energy}
Let $X$ be a finite simplicial complex. We define the \emph{discrete simplicial harmonic energy} of a map $f \colon \sk_2(X) \to S^2$ where $S^2$ is endowed with the usual geodesic metric as follows:
$$ \mathcal{E}_H (f) = \sum_{x \in \sk_2(X)^2} \frac{1}{2}||A(f (x))||^2  $$
where $A(f (x))$ is the area of the spherical triangle to which the 2-simplex $x$ is mapped.
\end{defn}

We assign an area of $4\pi$ (i.e., the whole sphere) to 2-simplices that are mapped via the degree 1 or $-1$ map as in 
Definition \ref{defn_canonical_lift}, and assign an area of 0 to 2-simplices that are mapped via the degree 0 map. Note that a map of degree 0 that collapses all of the vertices to a single point has zero area. Additionally, a triangle with two coincidental vertices has zero area, and therefore zero harmonic energy as well. This energy is a direct generalization of the discrete harmonic energy proposed in \cite{gu2004genus} in that if $\sk_2(X)$ is in fact a regular, air-tight triangle mesh, then the energies are identical. 

A spherical triangle with an area less than $4\pi$ can be uniquely identified by three points and its barycenter. (Note that without the barycenter, three points on the sphere define two spherically-complementary triangles). Given this information we can measure the sides lengths, $a,b,c$, and semiperimeter, $s=\frac{1}{2}(a+b+c)$, of the triangle using spherical trigonometry \cite{weisstein2008spherical}. Finally, we employ L'Huilier's Theorem \cite{lhuilier1812maemoire} to compute the spherical excess of the triangle. That is:
$
\mathrm{E(x)} = 4 \arctan{ \big( \tan{\frac{s}{2}} \tan{\frac{s-a}{2}} \tan{\frac{s-b}{2}} \tan{\frac{s-c}{2}} \big)^{\frac{1}{2}} }.
$
Since we are working on a unit sphere the spherical excess is equal to the surface area of the spherical triangle. 

\subsection{Spring Energy}

In some cases, it is useful to replace harmonic energy with a notion of spring energy so each simplex may have some nonzero rest-state area and so that we can tune a ``pull'' parameter (i.e., spring constant). This is especially useful in cases where the data is sampled from ``oblong'' shapes (see examples in Section \ref{sec:numerical}) and prevents data points from clumping too closely together. Although this introduces additional parameters into the model, we have found that parameter selection is not particularly onerous and allows us to to apply this scheme to a wider variety of domains. 

\begin{defn}\label{defn_spring_energy}
Let $X$ be a finite simplicial complex. We define the \emph{discrete simplicial spring energy} of a map $f \colon \sk_2(X) \to S^2$ as: 
$$ \mathcal{E}_S (f) = \sum_{x \in \sk_2(X)^2} \frac{1}{2}||k(A(f(x)) -R)||^2   $$
where $k$ is the spring constant, and $R$ is the rest area of the spring.
\end{defn}
We will show later in Section \ref{sec:numerical} that by tuning $k$ and $R$ we can develop better representations of data with nonuniform sampling densities or noise in the barcode. Additionally, using $k > 1$ can speed up the convergence of the optimization routine that we detail in the next section. Note that we can recover the harmonic energy formulation from the spring energy by setting  $R=0$ and $k=1$.

\subsection{Optimization Routine}\label{sec:opt}

Given $\alpha_0\colon\sk_2(X) \to S^2$ as in Section \ref{subsec_main_pipeline}, minimizing harmonic (or spring) energy alone is not enough to uniquely determine a new, smoother map to $S^2$ since the solution is not unique. The minimizers do, however, form a M{\"o}bius group \cite{gu2004genus}. Later, we will show that M{\"o}bius transformations preserve relevant topological information, but for now we use it to define a unique representation within this group. First we stipulate that the final map $\alpha_N = \omega$ must have zero center of mass, that is, we add the constraint $\sum_{x\in \sk_2(X)} \omega(x) = [0,0,0]$ when we view $S^2$ as the unit sphere in $\RR^3$. This solution is unique up to rigid motion, which we can fix later during post-processing (see Section \ref{sec:numerical}). Thus, given the map $\alpha_0  \colon \sk_2(X) \to S^2$, the optimization problem is:

\begin{equation}
    \argmin_{\alpha'\colon \sk_2(X)   \to S^2} \mathcal{E}\big(\alpha'\big) 
\end{equation}
subject to the constraints that 1) $\alpha'$ has zero center of mass and 2) $\alpha'$ can be extended to a map on $\sk_3(X)$, and that this extension is homotopic to the initial map $\tilde{\alpha}\colon\sk_3(X) \to S^2$. \ 

To solve this problem we use an alternating minimizaiton scheme beginning with the map $\alpha_0$ (see Section \ref{subsec_main_pipeline}). Note this is an infeasible starting point since it is not zero-centered. Moreover, since all of the vertices are mapped to the same point on the sphere, it is not possible to zero-center the vertices without moving them off of the unit sphere. Instead, we first decrease the energy by performing a few steps of manifold gradient descent \cite{bonnabel2013stochastic} without this constraint. This allows the vertices to spread which makes the enfrocemnt of this constraint tractable. To approach the feasible region of the zero center of mass constraint, we employ a M{\"o}bius transform that approximates the true transformation which centers the mass as in \cite{gu2004genus}. By choosing the step-size carefully we are able to ensure convergence to a unique minimizer. 

Consider a single spherical triangle with a nonzero area. The negative gradient of the harmonic energy pulls each vertex in the direction of the barycenter. Moreover, the magnitude of the gradient is proportional to the area of the triangle. Thus, to perform a step of the gradient update, while remaining on the sphere, we need to find the geodesic between each vertex and the barycenter as well as the area of the spherical triangle. In practice, we compute this direction as the spherical tangent to the geodesic, take a small step in the direction of the gradient in the tangent plane, and then project back to the sphere. So for vertex $p$, triangle $x$, and barycenter $b$, we compute $g = p-b - \langle p-b, p \rangle$, and then the gradient terms:

\begin{equation}
    \nabla_{p^{\perp},x} \mathcal{E}_H = \mathrm{A}(x) \frac{g}{||g||_2} \quad \text{and} \quad \nabla_{p^{\perp},x} \mathcal{E}_S =  k(\mathrm{A}(x)-D) \frac{g}{||g||_2}
\end{equation}
where $A(x)$ is the area of the triangle. Next, to compute the gradient update for the entire complex, we simply sum these gradients pointwise over all of the individual triangles $x$. This process is embarrassingly parallelizable and can be computed extremely efficiently. 

For a triangle with nonzero winding (i.e., a triangle which has wrapped around the entire sphere), we have the difficulty that there is not a unique geodesic between the vertex and the barycenter (since the vertices and barycenter are antipodal). However, since we have previously defined a positive orientation and negative orientation for these elements, we can choose a canonical direction of the gradient for one orientation and use its mirror image for the opposite orientation (see Figure \ref{fig:directions}) in order to define a gradient direction for each orientation. After one step in the  gradient direction, the triangle will no longer cover the entire sphere and can be treated as those above. Note that a different choice of orientation directions will lead to an equivalent final map  up to some reflection and/or rotation. 

Finally, to perform a step in the gradient direction, we move each vertex in its tangent plane in the direction determined by the gradient, scaled by a step-size parameter, $\delta_g$, and then normalize each point so that they once again lie on the unit sphere. After each gradient step, we update the barycenters of each spherical triangle by finding the Euclidean barycenter, normalizing it to the sphere, then choosing the correct pole based on the previous barycenter.

Computing the exact M{\"o}bius transform which centers the mass of a map $\sk_2(X) \to S^2$ is nonlinear and computationally expensive. Instead, following the method of \cite{gu2004genus}, we approximate the exact M{\"o}bius transform with one that is more easily computable and improve this approximation at each iteration. First, we compute the current center of mass $c$ and then move all of the points toward the center of mass by a step size of $\delta_m$. Finally, we project each point back onto the unit sphere using the Gauss map. In order to preserve the triangle orientation, the barycenters are updated similarly. Since the inital map, $\alpha_0,$ begins with all of the points being mapped to the same basepoint, we compute a few iterations of the gradient descent before beginning the alternating process. The full algorithm is presented in Algorithm \ref{apdx:alg}

\begin{figure}[ht]
    \centering
    \includegraphics[scale=0.55]{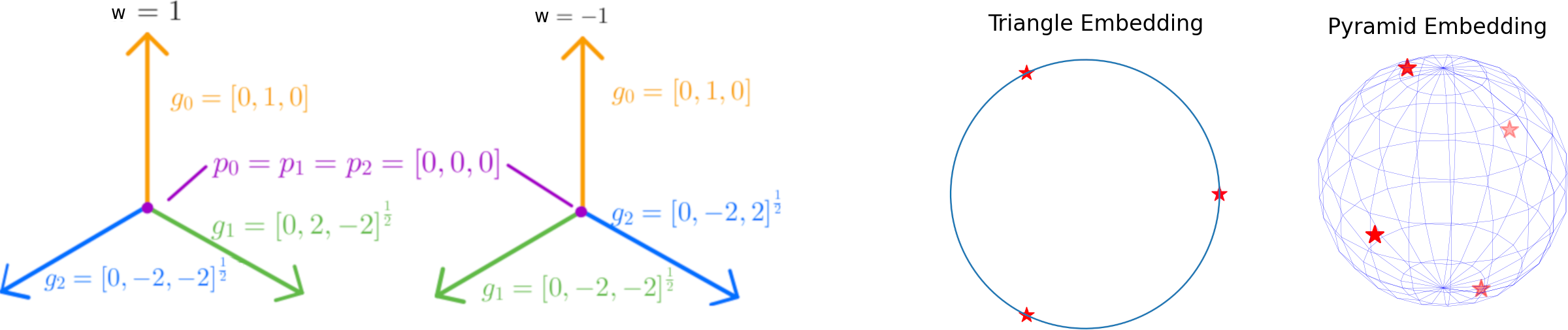}
    \caption{\textbf{Left}: Our choice of canonical directions for positive and negative orientations for a triangle with nonzero winding ($w = \pm 1)$, vertices at $[1,0,0]$ and barycenter $[-1,0,0]$. The outward normal is chosen to be positive and the inward negative. \textbf{Right:} Equilateral Triangle mapped to circle and  regular, triangular pyramid mapped to the unit sphere using harmonic energy minimizing maps}
    \label{fig:directions}
\end{figure}

\subsection{Relation to the 1D Problem}\label{sec:1dProblem}

The numerical scheme presented in Section \ref{sec:opt} can also be adapted to map the Vietoris-Rips complex $VR(X)$ on a data set $X$ to the unit circle $S^1$, recovering the original results of \cite{de2011persistent}. (Note that since $S^1$ has no higher homotopy groups, an initial map can be constructed on the entirety of $VR(X)$, as in \cite{de2011persistent}.) For harmonic energy, it is more efficient to explicitly calculate the harmonic representative as in \cite{de2011persistent}, but formulating the 1D problem is useful for building intuition for the 2D case. Here, the harmonic energy and spring energy for $f \colon VR(X) \to S^1$ are given by:

\begin{equation}\label{eqn:1dNRGS}
    \mathcal{E}_H (f) = \sum_{x \in VR(X)^1} \frac{1}{2}||L(f (x))||^2 
    \quad \text{and} \quad
    \mathcal{E}_S (f) = \sum_{x \in VR(X)^1} \frac{1}{2}||k(L(f (x))-R)||^2   
\end{equation}
where $L(f (x))$ is the arc length of $f(x)$, $k$ is a spring constant and $R$ is the spring rest-length. Similar to the set up for the spherical case, we interpret winding numbers $w =\pm 1$ as wrapping an edge around the circle in a clockwise ($w= 1$) or anti-clockwise ($w= -1$) direction. Since there is a unique harmonic representative of $f$ we do not need to compute M{\"o}bius updates, and can instead directly minimize the energy using a first-order scheme. The arguments presented in Section \ref{sec_preserving_homotopical_info} can be adapted to show that this scheme preserves the homotopy class of the map, and numerical results in Section \ref{sec:results1d} showcase the usefulness of the spring energy for 1D problems.

\section{Energy Minimization Preserves Homotopical Information} \label{sec_preserving_homotopical_info}

The purpose of this section is to fill in the details of Steps (7)-(9) of Section \ref{subsec_main_pipeline}. Beginning with a map $\tilde{\alpha} \colon \sk_3(X) \to S^2$, i.e., a representative of a homotopy class in $[\sk_3(X), S^2]$, the goal is to find a smoother but homotopic map $\tilde{\omega} \colon \sk_3(X) \to S^2$. As described in Section \ref{subsec_main_pipeline}, the minimization procedure begins by letting $\alpha_0$ be the restriction of $\tilde{\alpha}$ to $\sk_2(X)$ and then produces a sequence of maps $\alpha_i\colon\sk_2(X) \to S^2$ of lower harmonic (or spring) energy, terminating at a final map $\alpha_N = \omega$. It is crucial, then, that we verify each map produced by our minimization procedure extends to $\sk_3(X)$ and that this extension is homotopic to the previous so that, ultimately, $\tilde{\alpha}$ and $\tilde{\omega}$ represent the same homotopy class of maps $\sk_3(X) \to S^2$. Without this, there would be no topological connection between $\tilde{\alpha}$, representing a significant feature of the data, and the map $\tilde{\omega}$ produced by our algorithm. 

Our strategy is to, first, show that $\alpha_i \simeq \alpha_{i+1}$ for all $0 \leq i \leq N-1$ as maps $\sk_2(X) \to S^2$. This is the content of Theorem \ref{thm_minimization_preserves_homotopy_on_2_skeleton}. Then, Theorem \ref{thm_extension_to_3_skeleton} extends this sequence of homotopies to $\sk_3(X)$. Towards this end we begin with the following, which applies to any topological space $Y$ (though our main interest is the case when $Y$ is a simplicial complex).

\begin{prop}\label{prop_homotopic_if_not_antipodal}
Let $Y$ be a topological space, and suppose $f$ and $g$ are two maps $f, g \colon Y \to S^2$ from Y to the unit sphere in $\RR^3$. Let $\|-\|$ denote the Euclidean norm and suppose $\|f(y) - g(y) \| < 2$ for all $y \in Y$. Then $f \simeq g$.
\end{prop}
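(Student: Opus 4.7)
The plan is to use a straight-line homotopy in the ambient Euclidean space $\mathbb{R}^3$ and then radially project back onto $S^2$. Concretely, I would define
\begin{equation}
H \colon Y \times [0,1] \to S^2, \qquad H(y,t) = \frac{(1-t)f(y) + t g(y)}{\|(1-t)f(y) + t g(y)\|}.
\end{equation}
If this formula makes sense (i.e., the denominator never vanishes), then $H$ is manifestly continuous as a composition of continuous maps, satisfies $H(y,0) = f(y)$ and $H(y,1) = g(y)$, and so gives the desired homotopy $f \simeq g$.

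The only thing to verify, and the heart of the argument, is that $(1-t)f(y) + t g(y) \neq 0$ for every $y \in Y$ and every $t \in [0,1]$. At the endpoints $t=0$ and $t=1$ this is immediate because $f(y), g(y) \in S^2$ are unit vectors. For $t \in (0,1)$, the equation $(1-t)f(y) + t g(y) = 0$ forces $f(y) = -\tfrac{t}{1-t} g(y)$; taking Euclidean norms and using $\|f(y)\| = \|g(y)\| = 1$ yields $t = 1/2$, whence $f(y) = -g(y)$. But then $\|f(y) - g(y)\| = 2$, contradicting the hypothesis $\|f(y) - g(y)\| < 2$. Hence the denominator is bounded away from zero pointwise (and in fact continuously), so $H$ is well defined.

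The only subtlety worth flagging is geometric rather than technical: the hypothesis $\|f(y) - g(y)\| < 2$ is exactly the statement that $f(y)$ and $g(y)$ are never antipodal, which is the obstruction to radially projecting a straight-line homotopy onto the sphere. So this is really the standard ``non-antipodal maps to $S^n$ are homotopic'' fact, specialized to $n = 2$, and the proof should be essentially a one-paragraph verification with no real obstacle.
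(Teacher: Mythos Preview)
Your proof is correct and follows exactly the same approach as the paper: define the normalized straight-line homotopy $H(y,t) = \dfrac{(1-t)f(y) + tg(y)}{\norm{(1-t)f(y) + tg(y)}}$ and observe that the hypothesis $\norm{f(y)-g(y)} < 2$ is precisely the condition that $f(y)$ and $g(y)$ are never antipodal, so the denominator never vanishes. The paper's version is terser, simply asserting well-definedness from non-antipodality, whereas you spell out the algebraic verification; but the argument is the same.
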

\begin{proof}
Since $\|f(y) - g(y)\|<2$ for all $y \in Y$, we know that $f(y)$ and $g(y)$ are never antipodal, in which case the straight-line homotopy
\begin{align}
f_t(y) = \dfrac{(1-t)f(y) + tg(y)}{\norm{(1-t)f(y) + tg(y)}}
\end{align}
is well-defined, continuous, and shows $f \simeq g$.
\end{proof}
Proposition \ref{prop_homotopic_if_not_antipodal} lets us handle the gradient descent steps of the minimization procedure as follows.

\begin{prop}\label{prop_homotopic_if_gradient_descent_step}
Suppose $\alpha_i$ and $\alpha_{i+1}$ are successive functions in the minimization procedure described in Section \ref{sec_smoothing}. If $\alpha_{i+1}$ is obtained from $\alpha_i$ by a gradient descent step, then $\alpha_i \simeq \alpha_{i+1}$ as maps $\sk_2(X) \to S^2$.
\end{prop}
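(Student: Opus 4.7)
The plan is to apply Proposition \ref{prop_homotopic_if_not_antipodal} with $Y = \sk_2(X)$, $f = \alpha_i$, and $g = \alpha_{i+1}$. To do so, it suffices to show that $\|\alpha_i(y) - \alpha_{i+1}(y)\| < 2$ for every $y \in \sk_2(X)$, i.e., that the two maps never send a point to a pair of antipodes on $S^2$.

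First I would handle the $0$-simplices. Let $v$ be a vertex and write $p := \alpha_i(v) \in S^2$. By the explicit description of the gradient update in Section \ref{sec:opt}, the new image has the form
\begin{align}
\alpha_{i+1}(v) \;=\; \frac{p + \delta_g\, g}{\|p + \delta_g\, g\|},
\end{align}
where $g$ is the sum of the tangential gradient contributions over the 2-simplices incident to $v$, so in particular $\langle g, p \rangle = 0$. A direct computation then gives
\begin{align}
\langle \alpha_i(v),\, \alpha_{i+1}(v) \rangle \;=\; \frac{1}{\|p + \delta_g\, g\|} \;>\; 0,
\end{align}
so $\alpha_i(v)$ and $\alpha_{i+1}(v)$ lie in a common open hemisphere. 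Therefore $\|\alpha_i(v) - \alpha_{i+1}(v)\|^2 = 2 - 2\langle \alpha_i(v), \alpha_{i+1}(v)\rangle < 2$, and in fact is bounded above by $\sqrt{2}$. Notably, this estimate is automatic from the tangent-plane-then-project structure of the update and does not require a smallness hypothesis on $\delta_g$.

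Next I would extend the bound to the interior of each 2-simplex $\varphi = (v_1, v_2, v_3)$. By Definition \ref{defn_canonical_lift} and the discussion of the spherical-triangle construction in Section \ref{sec_smoothing}, the restriction $\alpha_i|_\varphi$ is determined by the three vertex images $\alpha_i(v_k)$ together with the barycenter datum recording the triangle's orientation (and, in the wrapping case, selecting between the two spherically-complementary triangles). This assignment is continuous in the vertex and barycenter positions, and the barycenter update prescribed in Section \ref{sec:opt} is itself a continuous function of the vertex updates. Hence, taking $\delta_g$ sufficiently small---a hypothesis already required in Section \ref{sec:opt} for convergence---guarantees that $\sup_{y \in \varphi}\|\alpha_i(y) - \alpha_{i+1}(y)\| < 2$ for every 2-simplex $\varphi$.

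I expect the main obstacle to be the careful bookkeeping for wrapping 2-simplices, where degenerate or near-antipodal configurations of vertices could in principle cause the barycenter convention to flip, producing a discontinuity in the spherical-triangle construction. One must verify that a sufficiently small gradient step cannot trigger such a flip---for instance, by restricting to a compact subset of admissible triangle configurations on which the construction is Lipschitz and bounding the step size in terms of that constant. Once this is established, the pointwise estimate $\|\alpha_i(y) - \alpha_{i+1}(y)\| < 2$ holds on all of $\sk_2(X)$, and Proposition \ref{prop_homotopic_if_not_antipodal} yields the desired homotopy $\alpha_i \simeq \alpha_{i+1}$.
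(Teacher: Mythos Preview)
Your proposal is correct and follows essentially the same approach as the paper: apply Proposition~\ref{prop_homotopic_if_not_antipodal} with $Y=\sk_2(X)$ and ensure the hypothesis by limiting the distance the vertices and barycenters move in a single gradient step. The paper's proof is a single sentence to this effect; your version supplies more detail, and your observation that the tangent-plane-then-project structure automatically keeps vertex images in a common open hemisphere (so that no smallness assumption on $\delta_g$ is needed at the $0$-skeleton) is a nice refinement not stated in the paper.
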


\begin{proof}
By limiting the distance that the vertices and barycenter of each 2-simplex are moved by each gradient descent step, this follows by taking $Y = \sk_2(X)$ as in Proposition \ref{prop_homotopic_if_not_antipodal}.
\end{proof}

Next, we must show that composing with a M{\"o}bius transformation preserves the homotopy class of a map $\sk_2(X) \to S^2$. We suspect the following is well-known to experts, but record a proof here for the sake of completeness.

\begin{prop}\label{prop_mobius_transformations}
Identifying $S^2$ with the Riemann sphere (or extended complex plane), any M{\"o}bius transformation $f$ is homotopic to the identity.
\end{prop}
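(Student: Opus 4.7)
The plan is to exhibit the homotopy by connecting $f$ to the identity through a continuous path inside the group of Möbius transformations itself, leveraging the fact that this group is path-connected as a topological group.

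First, I would normalize the representation: write $f$ in the form $f(z) = \frac{az+b}{cz+d}$ with $ad - bc = 1$, so that $f$ is encoded (up to sign) by the matrix $M = \begin{pmatrix} a & b \\ c & d \end{pmatrix} \in SL(2,\CC)$. The key ingredient is that $SL(2,\CC)$ is path-connected; this can be argued either by invoking that it is a connected complex Lie group, or more concretely by noting that any $M \in SL(2,\CC)$ admits a decomposition (e.g. via row reduction or the polar/SVD decomposition) that exhibits a continuous path to the identity. Choose a continuous path $\gamma \colon [0,1] \to SL(2,\CC)$ with $\gamma(0) = I$ and $\gamma(1) = M$, and write $\gamma(t) = \begin{pmatrix} a(t) & b(t) \\ c(t) & d(t) \end{pmatrix}$.

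Next, I would define the candidate homotopy $H \colon S^2 \times [0,1] \to S^2$ by letting $H(z,t)$ be the Möbius transformation associated to $\gamma(t)$, i.e. $H(z,t) = \frac{a(t)z + b(t)}{c(t)z + d(t)}$, interpreted on the Riemann sphere. Then $H(\cdot, 0) = \id$ and $H(\cdot, 1) = f$. The only thing to check is continuity as a map on $S^2 \times [0,1]$, including at the points $z = \infty$ and at those $(z,t)$ where $c(t)z + d(t) = 0$. This is routine once one works in the standard charts of the Riemann sphere: in the chart near $\infty$ using $w = 1/z$, the formula becomes $\frac{a(t) + b(t)w}{c(t) + d(t)w}$, and the two chart descriptions agree on overlaps because $\gamma(t) \in SL(2,\CC)$ is always invertible, so $(a(t),c(t))$ and $(b(t),d(t))$ are never simultaneously zero.

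The main (mild) obstacle is simply verifying that the formula genuinely extends to a continuous map on all of the compact space $S^2 \times [0,1]$, rather than a map defined only on the complement of finitely many singularities — but this is handled by the chart argument above, since a Möbius transformation is a homeomorphism of $S^2$ for every $t$. Once this continuity is confirmed, $H$ provides the required homotopy $f \simeq \id$, completing the proof.
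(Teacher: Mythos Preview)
Your proof is correct, but it takes a genuinely different route from the paper's. The paper works concretely: it writes $f(z) = \frac{az+b}{cz+d}$ and (for $c \neq 0$) factors $f$ as a composition $f_4 \circ f_3 \circ f_2 \circ f_1$ of two translations, a complex dilation, and the inversion $z \mapsto 1/z$. Each translation and dilation is a degree-one polynomial on $\CC$ extended to $S^2$, hence has degree $1$ as a self-map of $S^2$ and is homotopic to the identity by the Hopf degree theorem; the inversion is a rotation of $S^2$ and thus also homotopic to the identity. The case $c=0$ is handled separately.

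Your argument instead exploits the group structure directly: normalize to $SL(2,\CC)$, use its path-connectedness to produce a path $\gamma$ from $I$ to the matrix $M$ representing $f$, and let the homotopy be the family of M{\"o}bius transformations along $\gamma$. The chart check you describe is exactly the verification that the action $SL(2,\CC) \times S^2 \to S^2$ is jointly continuous. This is cleaner and more conceptual, avoids the case split on $c$, and immediately generalizes (any path-connected topological group acting continuously on a space yields homotopies this way). The paper's approach, by contrast, is entirely hands-on and avoids citing the connectedness of $SL(2,\CC)$ or any Lie-group facts, relying only on the explicit decomposition and the Hopf degree theorem.
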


\begin{proof}
There are a number of ways to see this, but one of the more straightforward is as follows. Identify $S^2$ with the extended complex plane and represent $f$ as $f(z) = \dfrac{az+b}{cz+d}$ for $a,b,c,d \in \CC$ with $ad-bc \neq 0$. If $c \neq 0$, decompose $f$ into a composition of simpler M{\"o}bius functions
\begin{align}
f_1(z) = z + \frac{c}{d}, \quad f_2(z) = \frac{1}{z}, \quad f_3(z) = \frac{bc-ad}{c^2}z, \quad f_4(z) = z + \frac{a}{c}
\end{align}
whence $f = f_4\circ f_3 \circ f_2 \circ f_1$. For $i = 1, 3, 4$, each $f_i$ is a polynomial on $\CC$ that we have canonically extended to $\CC \cup \sett{\infty}$. It is a well-known fact that the degree of such a function viewed as a map $S^2 \to S^2$ is equal to its degree as a polynomial. Therefore, $\deg(f_i) = 1$ and so $f_i$ is homotopic to the identity for $i = 1, 3, 4$ by the Hopf degree theorem. Lastly, $f_2$ represents a rotation $S^2$ by 180 degrees, so is also homotopic to the identity. Thus, $f$ is a composition of functions each of which is homotopic to the identity, so $f$ is as well. The case $c = 0$ is handled similarly.
    \end{proof}

Together, Propositions \ref{prop_homotopic_if_gradient_descent_step} and \ref{prop_mobius_transformations} show that our minimization procedure preserves the homotopy class of $\alpha_0 \colon \sk_2(X) \to S^2$.

\begin{thm}\label{thm_minimization_preserves_homotopy_on_2_skeleton}
Suppose $\alpha_0 \rightsquigarrow \alpha_1 \rightsquigarrow \alpha_2 \rightsquigarrow \ldots \rightsquigarrow \alpha_N$ is the sequence of maps $\sk_2(X) \to S^2$ obtained via the minimization procedure described in Section \ref{sec_smoothing}. Then $\alpha_0$ $\simeq \alpha_N$.
\end{thm}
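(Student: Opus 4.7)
The plan is to decompose the sequence $\alpha_0 \rightsquigarrow \alpha_1 \rightsquigarrow \cdots \rightsquigarrow \alpha_N$ into its individual transitions, show each one preserves the homotopy class on $\sk_2(X)$, and then invoke transitivity of $\simeq$ to conclude. By construction in Section \ref{sec:opt}, every transition $\alpha_i \rightsquigarrow \alpha_{i+1}$ is of exactly one of two types: (i) a small manifold gradient-descent step with respect to the harmonic (or spring) energy, or (ii) a post-composition $\alpha_{i+1} = M_i \circ \alpha_i$ with an approximate centering Möbius transformation $M_i$ of $S^2$.

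Transitions of type (i) are handled verbatim by Proposition \ref{prop_homotopic_if_gradient_descent_step}, which already yields $\alpha_i \simeq \alpha_{i+1}$. For transitions of type (ii), I would apply Proposition \ref{prop_mobius_transformations} to obtain a homotopy $H \colon S^2 \times [0,1] \to S^2$ from $\id_{S^2}$ to $M_i$, and then pre-compose with $\alpha_i$ in the spatial coordinate: the map $(x,t) \mapsto H(\alpha_i(x), t)$ is continuous on $\sk_2(X) \times [0,1]$ and restricts to $\alpha_i$ at $t=0$ and to $M_i \circ \alpha_i = \alpha_{i+1}$ at $t=1$. Hence $\alpha_i \simeq \alpha_{i+1}$ in this case as well.

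Chaining the step-wise homotopies by transitivity yields $\alpha_0 \simeq \alpha_1 \simeq \cdots \simeq \alpha_N$, and in particular $\alpha_0 \simeq \alpha_N$, as required. I do not expect any serious obstacle: the theorem is essentially a bookkeeping result that packages together the two preceding propositions via the elementary fact that pre-composing a homotopy with a fixed continuous map yields a homotopy. The only condition to monitor along the way is that each gradient update is small enough for Proposition \ref{prop_homotopic_if_not_antipodal} to apply to its endpoints, which is guaranteed by the step-size control built into the algorithm of Section \ref{sec:opt}.
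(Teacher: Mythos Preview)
Your proposal is correct and follows essentially the same approach as the paper: split the sequence into gradient-descent steps handled by Proposition~\ref{prop_homotopic_if_gradient_descent_step} and M\"obius steps handled by Proposition~\ref{prop_mobius_transformations}, then conclude by transitivity. Your explicit construction $(x,t)\mapsto H(\alpha_i(x),t)$ simply unpacks what the paper expresses compactly as $m\circ\alpha_i \simeq \id\circ\,\alpha_i$.
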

\begin{proof}
It follows from Proposition \ref{prop_homotopic_if_gradient_descent_step} that $\alpha_0 \simeq \alpha_1$. Inductively, suppose that we have shown $\alpha_{i-1} \simeq \alpha_i$. If $\alpha_{i+1}$ is obtained from $\alpha_i$ by a gradient descent step, then Proposition \ref{prop_homotopic_if_gradient_descent_step} also shows $\alpha_i \simeq \alpha_{i+1}$. Otherwise, $\alpha_{i+1} = m \circ \alpha_i$ for a M{\"o}bius transformation $m$. In this case, it follows from Proposition \ref{prop_mobius_transformations} that
\begin{align}
\alpha_{i+1} = m \circ \alpha_i \simeq \id \circ \ \alpha_i = \alpha_i
\end{align}
which completes the proof.
\end{proof}

We now show that the sequence of homotopies in Theorem \ref{thm_minimization_preserves_homotopy_on_2_skeleton} extends to $\sk_3(X)$.
\begin{thm}\label{thm_extension_to_3_skeleton}
Each map $\alpha_i$ in Theorem \ref{thm_minimization_preserves_homotopy_on_2_skeleton} extends to a map $\tilde{\alpha_i}\colon\sk_3(X) \to S^2$ and $\tilde{\alpha_i} \simeq \tilde{\alpha_{i+1}}$ for all $0 \leq i \leq N-1$.
\end{thm}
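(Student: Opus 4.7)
The plan is to prove the theorem by induction on $i$, using the homotopy extension property (HEP) for the CW pair $(\sk_3(X), \sk_2(X))$ as the main tool.

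For the base case $i = 0$, the map $\alpha_0$ is by definition the restriction of the initial map $\tilde{\alpha}$ from the Main Pipeline to $\sk_2(X)$, so we simply take $\tilde{\alpha_0} := \tilde{\alpha}$; the existence of this extension is exactly Proposition \ref{prop_lifting_to_three_skeleton}.

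For the inductive step, assume $\alpha_i$ has been extended to some $\tilde{\alpha_i}\colon\sk_3(X) \to S^2$. By Theorem \ref{thm_minimization_preserves_homotopy_on_2_skeleton} --- and more concretely by the explicit homotopies constructed in Propositions \ref{prop_homotopic_if_gradient_descent_step} (straight-line on the sphere, for gradient descent steps) and \ref{prop_mobius_transformations} (decomposition of the M{\"o}bius transformation into pieces homotopic to the identity) --- there is a homotopy $H\colon\sk_2(X) \times I \to S^2$ with $H_0 = \alpha_i$ and $H_1 = \alpha_{i+1}$. Since $(\sk_3(X), \sk_2(X))$ is a CW pair, it satisfies the HEP. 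Applying the HEP to the map $\tilde{\alpha_i}$ on $\sk_3(X)$ together with the homotopy $H$ of its restriction to $\sk_2(X)$, we obtain an extended homotopy $\tilde{H}\colon\sk_3(X) \times I \to S^2$ with $\tilde{H}_0 = \tilde{\alpha_i}$ and $\tilde{H}|_{\sk_2(X) \times I} = H$. Setting $\tilde{\alpha_{i+1}} := \tilde{H}_1$ produces the desired extension of $\alpha_{i+1}$ to $\sk_3(X)$, and $\tilde{H}$ itself witnesses the homotopy $\tilde{\alpha_i} \simeq \tilde{\alpha_{i+1}}$.

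The main conceptual step is recognizing HEP as the right tool; once identified, the remainder of the argument is formal. The mild subtlety worth flagging is that $\tilde{\alpha_{i+1}}$ is only determined up to the choice of extended homotopy $\tilde{H}$, but this is harmless: the downstream use of the theorem in Section \ref{subsec_main_pipeline} only requires the \emph{existence} of extensions realizing the chain $\tilde{\alpha} \simeq \tilde{\alpha_1} \simeq \cdots \simeq \tilde{\omega}$, and any sequence of HEP-produced extensions suffices. An alternative, more hands-on route would build each $\tilde{\alpha_{i+1}}$ cell-by-cell on each $3$-simplex $\sigma$ using that $\pi_2(S^2) \cong \ZZ$ is detected by degree and that the degree of $H_t|_{\partial\sigma}$ is constant in $t$, but invoking HEP for the CW pair is considerably cleaner.
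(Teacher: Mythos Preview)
Your proof is correct and follows essentially the same approach as the paper: both start from $\tilde{\alpha_0} = \tilde{\alpha}$ and proceed inductively by invoking the Homotopy Extension Property of the CW pair $(\sk_3(X), \sk_2(X))$ to extend the homotopy $\alpha_i \simeq \alpha_{i+1}$ on the 2-skeleton to a homotopy $\tilde{\alpha_i} \simeq \tilde{\alpha_{i+1}}$ on the 3-skeleton. Your version is somewhat more explicit about how the HEP is applied and adds helpful remarks on non-uniqueness and an alternative degree-based construction, but the underlying argument is the same.
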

\begin{proof}
By construction, an extension of $\alpha_0$ to $\sk_3(X)$ is given by $\tilde{\alpha}$. Then, inductively for $0 \leq i \leq N-1$, the Homotopy Extension Property of the pair $(\sk_3(X), \sk_2(X))$ gives an extension of $\alpha_{i+1}$ to $\tilde{\alpha_{i+1}}\colon\sk_3(X) \to S^2$ and shows that $\tilde{\alpha_i} \simeq \tilde{\alpha_{i+1}}$ as maps $\sk_3(X) \to S^2$.
\end{proof}

\section{Numerical Results}\label{sec:numerical}

Here we present some numerical results on syntheic and real-word data. All experiments were performed in Python using Ripser \cite{ripser, ctralie2018ripser} to compute cohomology classes. 

\subsection{Synthetic Problems}\label{sec:results1d}

We begin with a series of illustrative tests on the 1D problem to demonstrate the efficacy of our approach and showcase the difference between the harmonic and spring energy. 
Figure \ref{fig:1D synthetic} shows five tests of our approach on data sampled from topological circles. The first column shows the original data, the second plots the cocycles computed by Ripser, the third shows the final mapping of the data, and the fourth shows a plot of the circular coordinate ($x$-axis) vs. an arc length parameterization coordinate of the data. The first test is an evenly sampled circle, for which we recover the original coordinates almost exactly. The second shows a much sparser sampling from a noisy circle. The third row is a trefoil knot, embedded in three dimensions, but visualized here in two. 

The fourth and fifth rows in Figure \ref{fig:1D synthetic} demonstrate the difference between spring and harmonic energies by applying them to the same problem. We begin with an ellipse in 2D which is sampled at a rate proportional to its curvature and then embedded isometrically in 50 dimensions via a random orthogonal projection. Next we add Gaussian noise in the embedded space. This type of sampling is common in both theoretical work and applications \cite{belkin2003laplacian, cayton2005algorithms, niyogi2008finding, schonsheck2022semi}. Next, we obtain the cocycles with Ripser and then minimize harmonic energy (shown in the fourth row) and spring energy (shown in the fifth). Here we see that the spring energy produces a much more reasonable map since it is able to better distribute points from the more densely sampled regions around the ends of the ellipse. By spreading out these regions of high curvature, we obtain a map which is both more visually readable and useful for downstream tasks.

\begin{figure}[H]
    \centering
    \includegraphics[width=.8\linewidth]{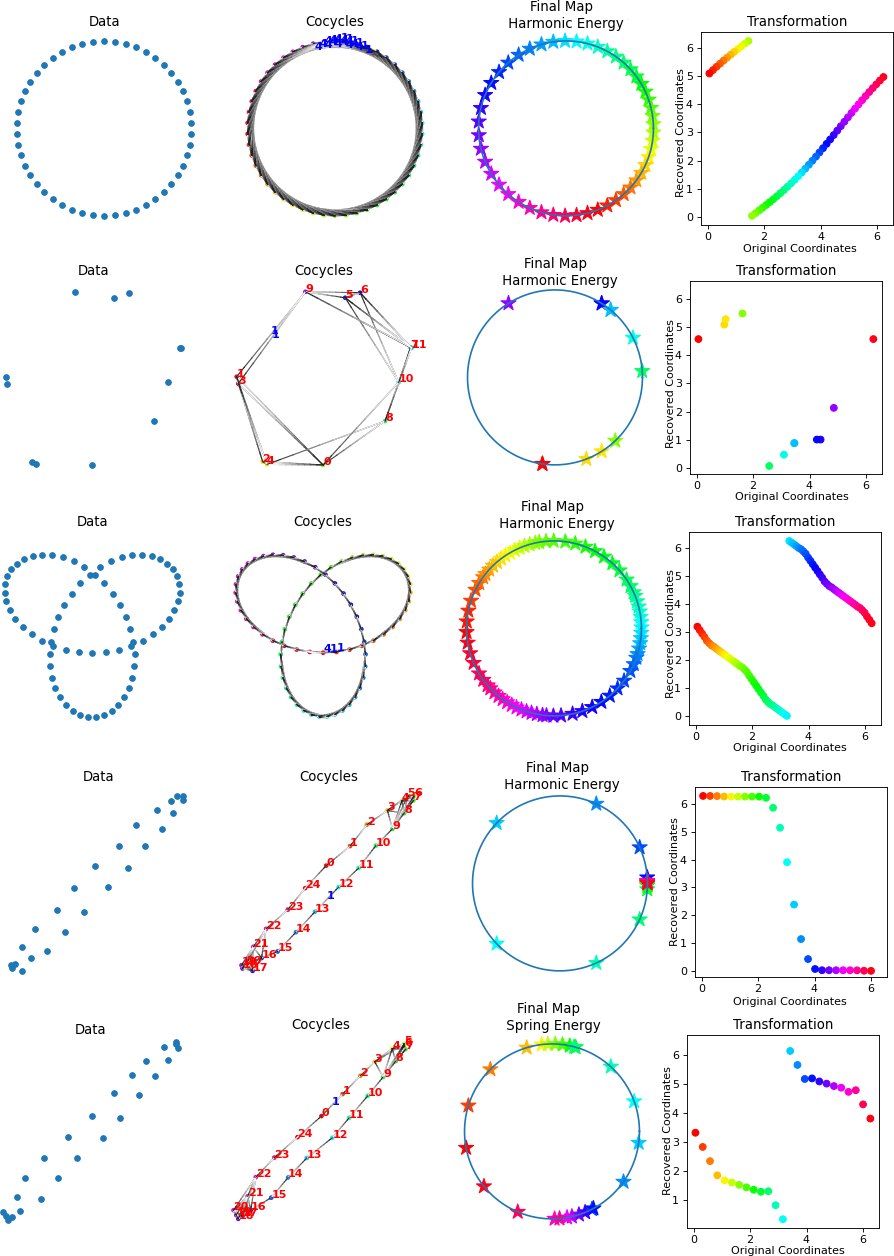}
    \caption{\textbf{First Row:} Evenly sampled circle, \textbf{Second Row:} Noisy, sparsely sampled circle, \textbf{Third Row:} Trefoil knot in 3D (visualized in 2D) with spring energy, \textbf{Fourth row:} Noisy Ellipse in 50D (visualized in 2D without noise) with harmonic energy, \textbf{Last row:} Same as above but with spring energy}
    \label{fig:1D synthetic}
\end{figure}

Next, we demonstrate our approach to problems using degree two cohomology. Figure \ref{fig:2D synthetic} shows a battery of tests. The first column shows four visualizations of the original data from different angles. We then embed in $\RR^{50}$ by multiplying the original coordinates with a random $(3 \times 50)$ matrix with orthogonal columns to simulate a low-dimensional object embedded in a higher dimensional space. In the noisy cases, we add noise in this ambient dimension. The second column shows the barcodes detected by Ripser. We highlight the selected feature we use as input to the variational model in red. The third column shows the final  coordinates produced by our algorithm. Finally, the fourth column shows the generated azimuth and elevation coordinates (measured in radians) of the initial data versus the final coordinates. To measure this we compute a rigid alignment of the final coordinates produced by our algorithm using their known correspondences via singular value decomposition (see \cite{Dubrovina2010} for additional details).  Note that, below, the $x$ and $y$ axes of the ``Azimuth'' and ``Elevation'' diagrams are ``circular,'' so that points very near the top (resp. right) are, in fact, quite close to points near the bottom (resp. left) and vice-versa. For the ellipsoid cases, we use the ParamTools package \cite{Kapur2020} to generate the sampling.

\begin{figure}[H]
    \centering
    \includegraphics[width=\linewidth]{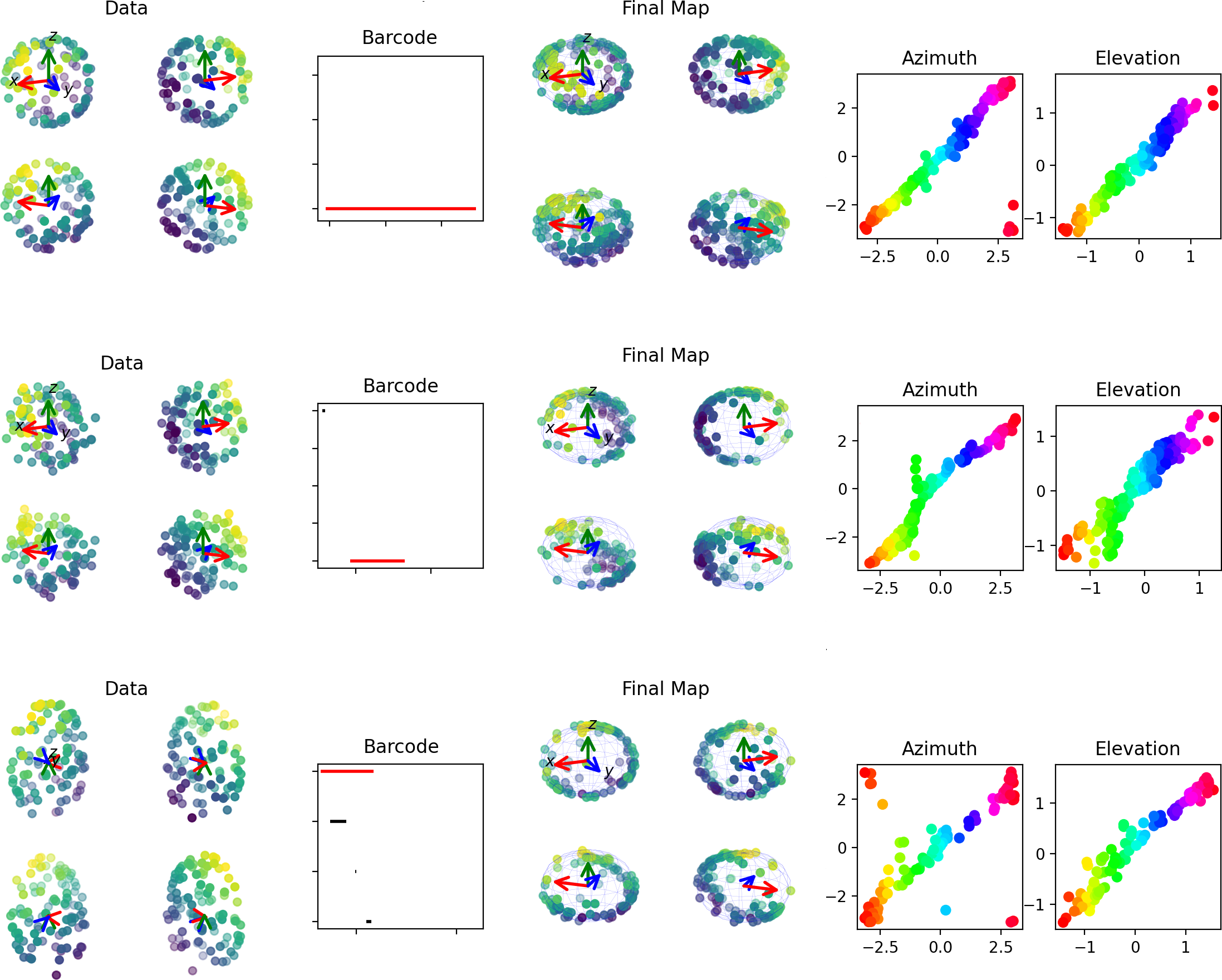}
    \caption{\textbf{First Row}: Sphere \textbf{Second Row}: Sphere with noise, \textbf{Third Row}: Ellipse}
    \label{fig:2D synthetic}
\end{figure}

From these experiments we conclude that our method is able to define spherical coordinates for point clouds formed by sampling spheres and ellipsoids embedded in high dimensions. After alignment, we observe that the coordinates of the points are close to the ground truth coordinates used to generate the underlying data. Additionally, by tuning the spring constants we can produce visually pleasing maps which are useful for data visualization and downstream tasks to be explored in future research.

\subsection{Feature Selection}\label{subsec_feature_selection}

While we have chosen high-persistence bars as the inputs to our pipeline in the experiments above, our methodology can be used to derive spherical coordinates from any chosen bar. However, if that bar does not represent a significant feature of the data, then the coordinates we find are not likely to be meaningful. Figure \ref{fig:feature_selection} illustrates this for the 1D and 2D problems.

\begin{figure}[H]
    \centering
    \includegraphics[width=\linewidth]{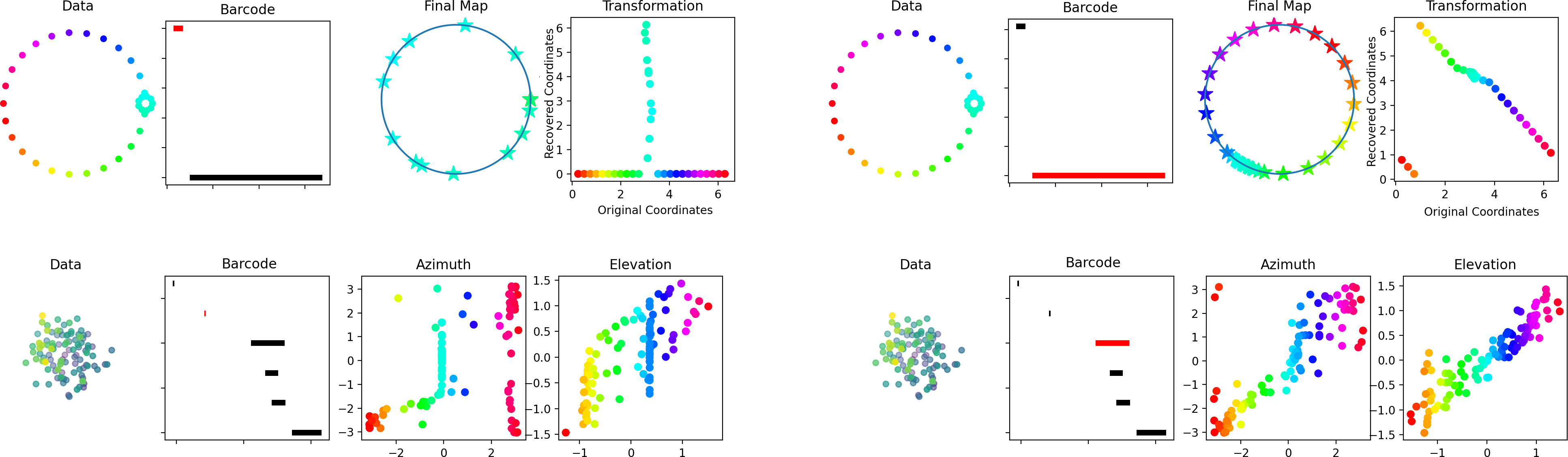}
    \caption{\textbf{First Row}: Results for two-circle example. The first four images are associated with the shortest bar and the last four with the longest. \textbf{Second Row}: Results for a noisy sphere. Again the first four images are associated with the shortest bar and the last four with the longest.}
    \label{fig:feature_selection}
\end{figure}

First, in the top row, we consider a data set consisting of samples from a large circle, augmented by samples from a smaller circle centered around a point on the circumference of the first circle. If we choose a low-persistence bar, then the cocycles we compute are associated with the smaller circle, and the final coordinates produced by our algorithm do not capture the global topology of the data set. The first image shows the data set and the second image shows the barcode diagram with the selected feature highlighted in red. The third image shows the final embedding and the fourth image displays the coordinate accuracy. These results do not capture the main structure of the data set. However, we can recover a good embedding by choosing the highest-persistence bar (as shown in the second four images in the top row).

A similar phenomenon happens when working with noisy data. In the second row, we illustrate two tests on a very noisy sphere. Again, in the first test, we chose the smallest bar (indicated by the red line in the barcode) and find the results unsatisfactory. However, by selecting the longest bar we find good representations. Of course, for data sets with multi-scale features, it may be useful to develop multiple embeddings. In the next section, we study some simple cases but leave a more thorough study of the problem for future works.

\subsection{More Complicated Topology}

In this section, we illustrate several interesting cases where the underlying topology is not simply a single sphere, but rather some combination of two spherical sub-domains. We show how pruning simplices in a post-process can improve the results and effectively disentangle the subdomains allowing us to develop two maps, one for each feature. We begin with two disconnected spheres, which we color red and blue to help disambiguate the points in the embedding. Of course, these could be separated in pre-possessing, but, for now, we delay this in order to build intuition. 

We begin by applying our algorithm to this this data set exactly as we did in the previous experiments. In the top row of Figure \ref{fig:2ds_con}, we select the feature associated with the red sphere and in the bottom row we select the feature associated with the blue sphere. Even though the spheres are disconnected, the points on the blue sphere still contribute to the zero center of mass constraint and result in an unevenly distributed final map (left hand side of Figure \ref{fig:2ds_con}). However, after pruning these points manually we are able to recover a good representation of each sphere, as shown in the right hand side of Figure \ref{fig:2ds_con}. From this observation, we propose a pruning scheme: After completion of the algorithm, if the results are unsatisfactory, we remove all simplices which have an area less than some threshold (in this case $1e^{-2}$) and apply the minimization procedure again to the remaining points. 

\begin{figure}[H]
    \centering
    \includegraphics[width=\linewidth]{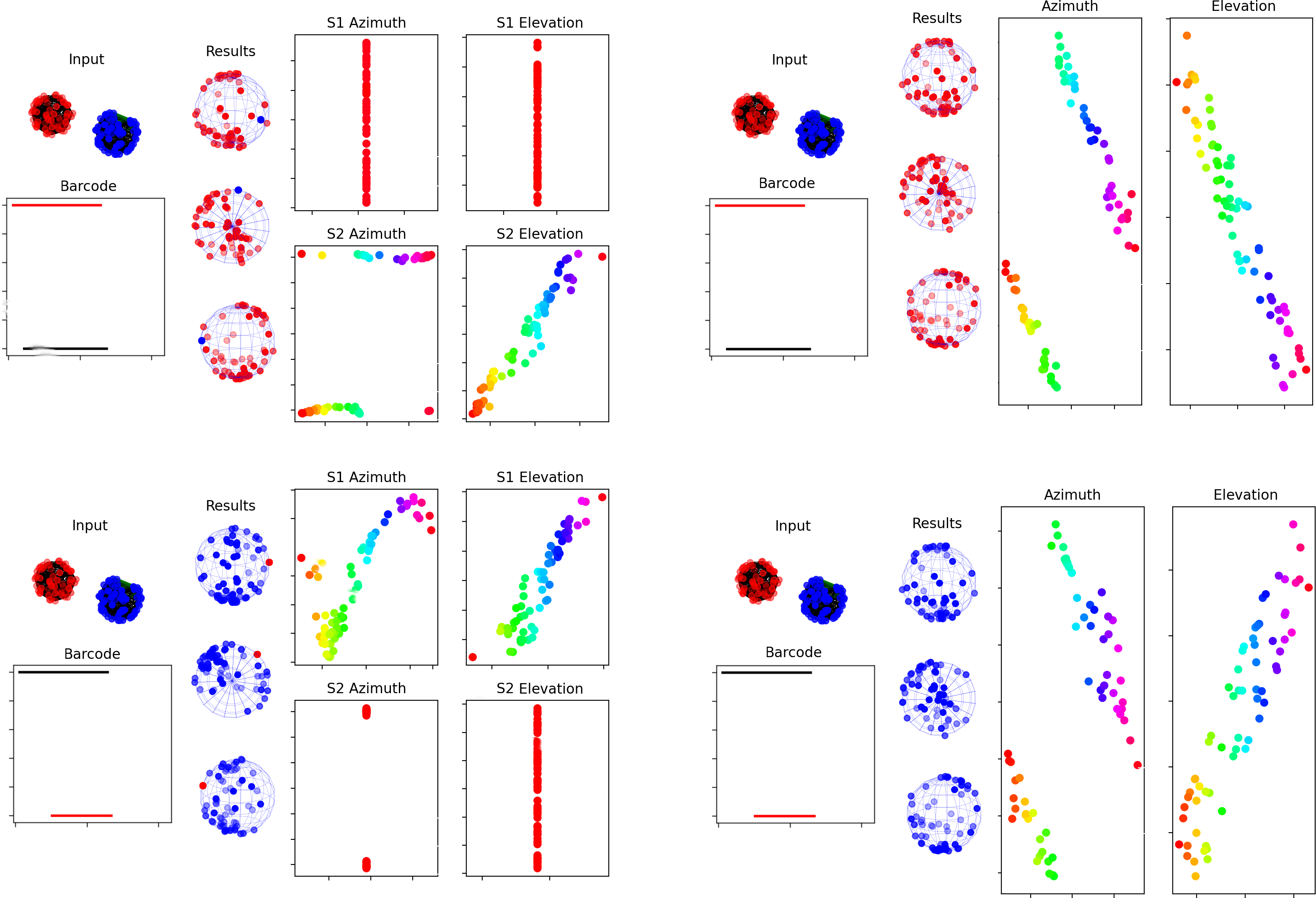}
    \caption{\textbf{Column 1 of each subfigure}: Top: Disconnected input data. Simplex connections produced by Ripser are shown as black lines. Bottom: Barcode with the selected bar highlighted in red. \textbf{Column 2 of each subfigure}: Final maps visualized from three angles \textbf{Columns 3 and 4 of each subfigure}: Computed coordinates vs ground truth.}
    \label{fig:2ds_con}
\end{figure}

A similar phenomenon appears when applying our algorithm naively to a data set consisting of points sampled from a wedge of two spheres (Figure \ref{fig:2ds_dis}). Here there are two main topological features (i.e., two high-persistence bars in the barcode), corresponding to the two spheres. Neither feature produces a satisfactory final map since the underlying data is non-spherical (as shown in the left column of Figure \ref{fig:2ds_dis}). Note that in the first case, the chosen feature corresponds to the red sphere, and in the second the chosen feature corresponds to the blue sphere. In both of these cases, we see that the points associated with the sphere not chosen are mapped very close to a lower dimensional subspace (a line) by the algorithm. From this observation we propose to apply the pruning scheme explained above to improve the results. Figure \ref{fig:2ds_dis} shows the results before (left column) and after (right column) applying our pruning technique. Although the results are not perfect (note that there are still some red points on the blue sphere and vice versa) we observe that the results are much improved. Recently, several works have aimed at organizing or disentangling topological features from data sets with noisy barcodes \cite{perea_circular, perea_projective, chen2019topological}. We hope to explore this pruning technique in concert with these methods to develop procedures that are robust to noise in future works.

\begin{figure}[H]
    \centering
    \includegraphics[width=.95\linewidth]{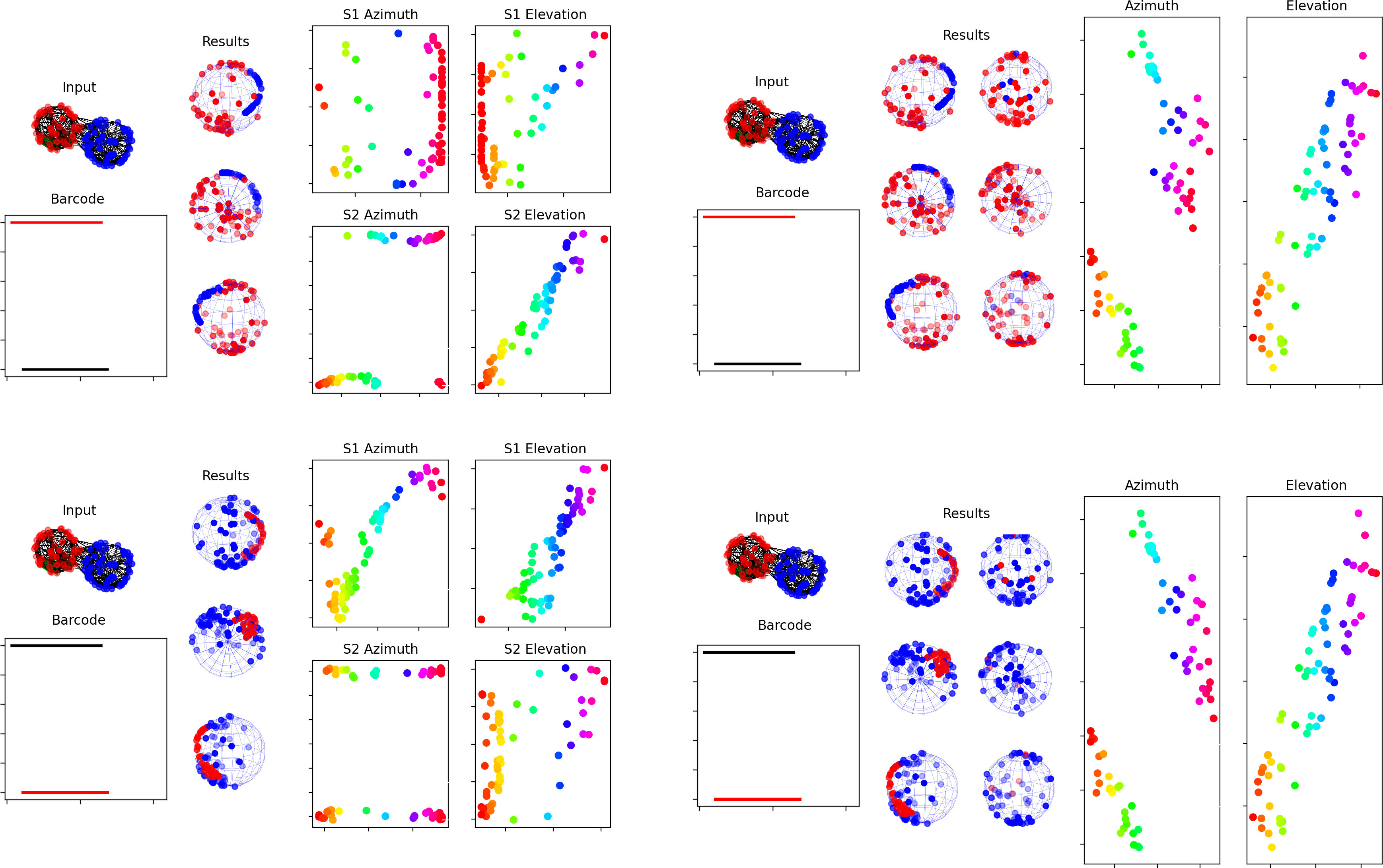}
    \caption{\textbf{Column 1 of each subfigure}: Top: Input data. Simplex connections produced by Ripser are shown as black lines. Bottom: Barcode with the selected feature highlighted in red. \textbf{Column 2 of each subfigure}: Final maps visualized from various angles. \textbf{Columns 3 and 4 of each subfigure}: Computed coordinates vs ground truth.}
    \label{fig:2ds_dis}
\end{figure}

\subsection{Applied Topological Data Analysis Problems}

In this section we apply our techniques to a few topological data analysis problems. Each of the following examples comes from a single well-defined circular or spherical domain, and as a result we do not employ any post-processing.
We begin with a 1D example taken from the Columbia Object Image Library (COIL-100) data set \cite{nene1996columbia}. The data set contains images of everyday objects (in our case a rubber ducky) photographed from different angles along a single axis of rotation. There are 72 images per object (taken at 5 degree intervals) and each image has a resolution of 128 by 128 and therefore can be represented as a point in $\RR^{16384}$ by flattening the image into a single vector. These points can be modeled as a discretization of a topological circle in $\RR^{16384}$. Applying our algorithm to the problem, we are able to closely recover the angle at which each picture was taken. Figure \ref{fig:duck} shows the embedding along with the coordinate reconstruction using both harmonic and spring energy minimization. For this case, the harmonic energy minimization did not produce satisfactory results as shown in the left side of Figure \ref{fig:duck}. This is due to the irregularity of the 3D model. Although each image is offset by 5 degrees, the pixel distance between the different images varies greatly and as a result the harmonic energy minimization leads to a clumping of the data. However, as shown on the right side of Figure \ref{fig:duck} we can still obtain a more evenly distributed map via the spring energy formulation.

\begin{figure}[H]
    \centering
    \includegraphics[width=\linewidth]{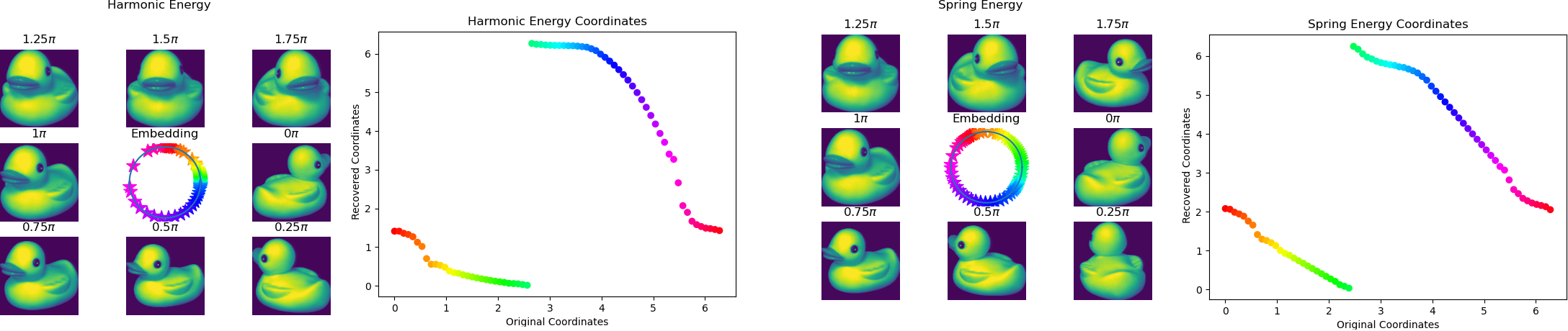}
    \caption{\textbf{Column 1 of each subfigure}: The embedding of the COIL-100 duck. Starting with an arbitrary origin, we show the images whose final coordinates differ by $\frac{\pi}{4}$. \textbf{Column 2 of each subfigure}: Coordinates of final map vs. ground truth coordinates from data set} 
    \label{fig:duck}
\end{figure}

We conduct a similar experiment to the COIL problem by creating our own data set of images based on rotating an object in 3D. In Figure \ref{fig:bunny} we show the results of applying our algorithm to a data set consisting of 200, 128 by 128 pixel images of the Stanford Bunny \cite{pomerleau2012challenging} under random rotations. Although the barcode of the data set is irregular, by selecting the longest bar and using a large enough spring constant we are able to embed the images onto the sphere in a meaningful way and recover the approximate angle each photo was taken from.

\begin{figure}[H]
    \centering
    \includegraphics[width=\linewidth]{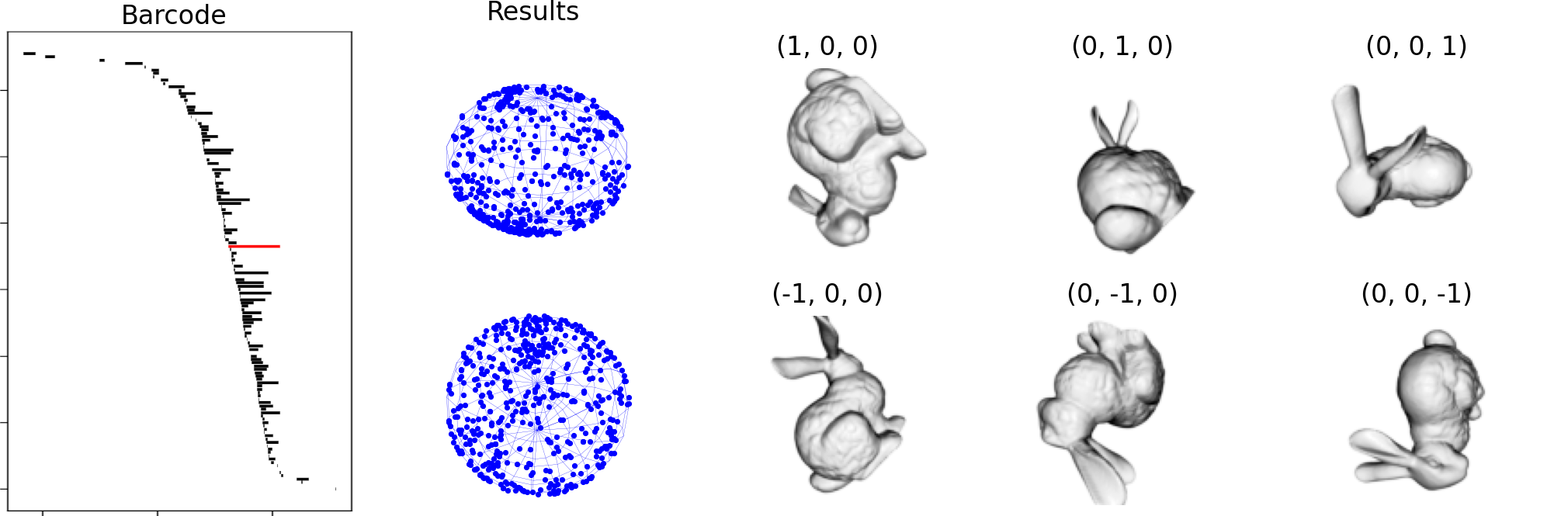}
    \caption{Stanford bunny experiment. In the right three columns, we show the images nearest to the point indicated.}
    \label{fig:bunny}
\end{figure}

We also conduct a synthetic experiment inspired by applications of topological data analysis to neuroscience. It has long been known \cite{place_cells} that certain cells in the hippocampus, called place cells, have spatially localized receptive fields. In other words, they represent one's position during navigation by firing rapidly at specific locations in an environment, while maintaining low activity at all others. Analysis of such data using persistent (co)homology can recover the coarse geometry of the environment itself \cite{Clique_topology, decoding_neural}. Similarly, head direction as encoded by a product of circles can be recovered \cite{Finkelstein_three_dimensional_coding} from neural activity using persistence techniques. We model a variation of this idea: what if place cells were embedded not on, e.g., a circle as a rat runs around a circular track, but rather on a sphere?

We begin by choosing 64 nearly-equally distributed points on the sphere using the Fibonacci lattice \cite{stanley1975fibonacci, wiki:Young_Fibonacci_lattice}. These points serve as sensors for a discrete random walk on the sphere. During the $i^{th}$ step a sensor $j$ measures the distance to a walk point $x_i$ as $S_j(x_i) = \exp{(-D(x_i, j))} + \mathcal{N}$ where $D(x_i,j)$ is the distance between $x_i$ and the $j^{th}$ sensor and $\mathcal{N}$ is random Gaussian noise. We simulate 25 discrete random walks of length 25 steps and geodesic step size $\delta=0.1$ on the sphere to generate a matrix $S=s_{ij}$. Then our goal is to use $S$ to recover the original coordinates of the sensors. Figure \ref{fig:sensors} shows the coordinate recovery for the cases with $\mathcal{N}$ having standard deviation $\sigma = 0,0.2,0.5$. From these experiments we conclude that our method is fairly robust to noise. We look forward to conducting similar experiments on real-word sensor data in future works.

\begin{figure}[H]
    \centering
    \includegraphics[width=\textwidth]{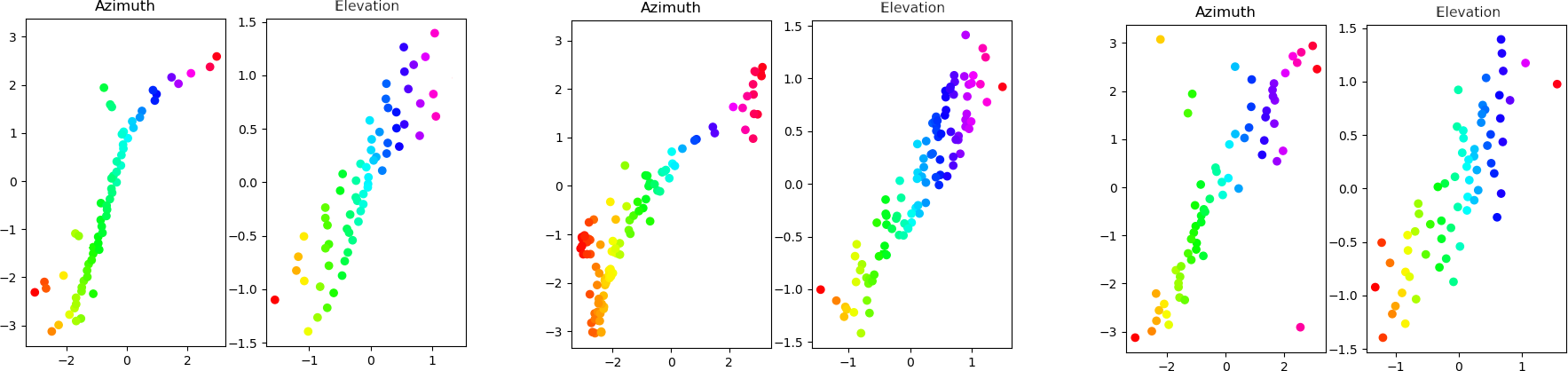}
    \caption{Results from the sensors experiments under different levels of noise. From right to left the noise values are: $\sigma=0,0.2,0.5$}
    \label{fig:sensors}
\end{figure}

\section*{Declarations}
The authors have no relevant financial or non-financial interests to disclose.






\bibliographystyle{plain}
\bibliography{bib}


\appendix


\section{Algorithm} \label{apdx:alg}
\begin{algorithm}[H]
\caption{Energy Minimization }\label{alg:cap}
\begin{algorithmic}
\Require $\sk_2(X)$, $B$, $\alpha_0$, $r$
\State \textbf{Initialize} $p = [1,0,0]^N$  \Comment{Begin with all points on basepoint}
\State i = 0
\While{Not Converged}
    \State $\mathcal{E} \gets 0$ and $\nabla_{p \perp} \mathcal{E} \gets 0$
    
    \For {$x \in \sk_2(X)^2$}  \Comment{loop through triangles}
        \State $\mathcal{E} \gets \mathcal{E} + \mathrm{E}(x)$ 
        \For {$p \in x$}
            \State $g \gets p-B(x)$
            \State $g \gets g - \langle g, p \rangle$
            \State $g \gets \dfrac{g}{||g||_2}$
            \State $\nabla_{p \perp} \mathcal{E} (p) = \mathrm{E}(x) g$ \Comment{Update gradient}
        \EndFor
    \EndFor
    \State $p \gets p - r (\nabla_{p \perp} \mathcal{E})$
    \State $p \gets \dfrac{p}{||p||_2}$
    \State $B \gets \dfrac{\sum_{p \in x} p}{||\sum_{p \in x} p||_2}$
 \Comment{Find barycenters of shifted triangles}
    
    \If {$i \geq 50$}  \Comment{Begin mass centering after points have spread from the initial position}
        \State $c \gets \sum_p (p)$
        \State $p \gets p - rc$
        \State $p \gets \dfrac{p}{||p||_2}$
        \State $B \gets B - rc$  \Comment{Barycenters shift the same as points}
        \State $B \gets \dfrac{B}{||B||_2}$
    \EndIf
    
    \State i++
    
\EndWhile
\end{algorithmic}
\end{algorithm}

\end{document}